\newtheorem{thm}{Theorem}[section]
\newtheorem{cor}[thm]{Corollary}
\newtheorem{prop}[thm]{Proposition}
\theoremstyle{definition}
\newtheorem{definition}[thm]{Definition}
\title{Groups acting on quasi-median graphs. An introduction}
\date{\today}
\author{Anthony Genevois}
\begin{document}

\maketitle

\begin{abstract}
Quasi-median graphs have been introduced by Mulder in 1980 as a generalisation of median graphs, known in geometric group theory to naturally coincide with the class of CAT(0) cube complexes. In his PhD thesis, the author showed that quasi-median graphs may be useful to study groups as well. In the present paper, we propose a gentle introduction to the theory of groups acting on quasi-median graphs. 
\end{abstract}

\tableofcontents

\section{Introduction}

\hspace{0.5cm} CAT(0) cube complexes were introduced by Gromov in his seminal paper \cite{Gromov1987} as a convenient source of examples of CAT(0) and CAT(-1) groups. But their strength really appeared with the recognition of the central role played by the combinatorics of their hyperplanes, initiated by Sageev in his thesis \cite{MR1347406}. Since then, several still open conjectures for CAT(0) spaces were verified for CAT(0) cube complexes, including the (bi)automaticity of cubulated groups \cite{NibloReeves}, the Tits Alternative for groups acting freely on finite-dimensional CAT(0) cube complexes \cite{alternative}, and the Rank Rigidity Conjecture \cite{MR2827012}. Recently, CAT(0) cube complexes were also crucial in the proof of the famous virtual Haken conjecture \cite{MR3104553}. 

Independently, Roller \cite{Roller} and Chepo\"{i} \cite{mediangraphs} realised that the class of CAT(0) cube complexes can be naturally identified with the class of the so-called \emph{median graphs}. These graphs were known by graph theorists for a long time since they were introduced by Nebesk\'y in 1971 \cite{NebeskyMedian}. Since then, several classes of graphs were introduced as generalisations of median graphs (see for instance \cite{weaklymoduloar} and references therein), including the main subject of this article, \emph{quasi-median graphs}, which were introduced by Mulder in 1980 \cite{Mulder} and more extensively studied by Bandelt, Mulder and Wilkeit in 1994 \cite{quasimedian}. 

In \cite{Qm}, we showed that quasi-median graphs appear naturally in several places in geometric group theory, and that finding group actions on such graphs may be extremely useful. The goal of this article is to give a gentle introduction to the formalism introduced in \cite{Qm}. 

The paper is organised as follows. In Section \ref{section:qmvscube}, we notice that the geometry of quasi-median graphs is quite similar to the geometry of CAT(0) cube complexes. In particular, we generalise the definition of hyperplanes and show that geometry essentially reduces to the combinatorics of the hyperplanes. In Section \ref{section:QmEx}, examples of groups acting on quasi-median graphs are given. We focus on graph products, wreath products, and diagram groups. In Section \ref{section:rotative}, we introduce and study \emph{rotative stabilisers} of hyperplanes. They are used to embed graph products into groups acting on quasi-median graphs, and, under some assumptions, a decomposition as a semidirect product is proved. Finally, Sections \ref{section:topicalI} and \ref{section:topicalII} are dedicated to \emph{topical-transitive} actions, which can be used to prove combination theorems. Applications to the groups mentioned in Section \ref{section:QmEx} are given.

\section{Quasi-median graphs look like CAT(0) cube complexes}\label{section:qmvscube}

\noindent
Quasi-median graphs may be define in many different ways; see for instance \cite{quasimedian}. In \cite{Qm}, the definition we use is the following:

\begin{definition}
A graph is \emph{weakly modular} if it satisfies the following two conditions:
\begin{description}
	\item[(triangle condition)] for any vertex $u$ and any two adjacent vertices $v,w$ at distance $k$ from $u$, there exists a common neighbor $x$ of $v,w$ at distance $k-1$ from $u$;
	\item[(quadrangle condition)] for any vertices $u,z$ at distance $k$ apart and any two neighbors $v,w$ of $z$ at distance $k-1$ from $u$, there exists a common neighbor $x$ of $v,w$ at distance $k-2$ from $u$.
\end{description}
A graph is \emph{quasi-median} if it is connected, weakly modular and if it does not contain $K_4^-$ and $K_{3,2}$ as induced subgraphs (see Figure \ref{figure7}).
\end{definition}
\begin{figure}
\begin{center}
\includegraphics[scale=0.6]{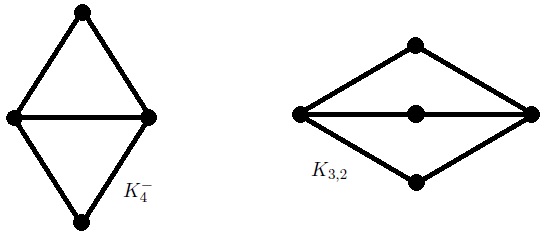}
\end{center}
\caption{The graphs $K_4^-$ and $K_{3,2}$.}
\label{figure7}
\end{figure}

\noindent
Although this definition turns out to be convenient to work with, the analogy with CAT(0) cube complexes is not clear. This analogy becomes more explicit thanks to the following two statements (which were originally proved only for finite graphs). We refer to the corresponding references for the needed definitions.  

\begin{thm}\emph{\cite{retracthypercube}}
Let $X$ be a finite graph. The following statements are equivalent:
\begin{itemize}
	\item $X$ is median;
	\item $X$ is a rectract of a hypercube;
	\item $X$ is obtained from cubes by gated amalgams.
\end{itemize}
\end{thm}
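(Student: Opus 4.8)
The plan is to establish the four implications $(2)\Rightarrow(1)$, $(3)\Rightarrow(1)$, $(1)\Rightarrow(3)$ and $(1)\Rightarrow(2)$, which together give all the equivalences. The implication $(2)\Rightarrow(1)$ is quick: a retraction $r\colon Q\to X$ of a hypercube onto $X$ is a graph morphism fixing $X$ pointwise, hence non-expansive, and since $X$ is a subgraph of $Q$ this forces $d_{X}=d_{Q}$ on $X$, i.e.\ $X$ is isometrically embedded. Hypercubes are median (coordinatewise majority rule), so for $x,y,z\in X$ the median $m=m_{Q}(x,y,z)$ exists in $Q$; because $r$ is non-expansive and preserves distances, $r(m)$ lies in the $X$-intervals $I(x,y)$, $I(y,z)$, $I(z,x)$, so $r(m)$ is a median of $x,y,z$ in $X$, and any other such median, being a median in $Q$ as well, must equal $m$. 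Hence $X$ is median. For $(3)\Rightarrow(1)$, hypercubes are median, and one invokes the standard fact that a gated amalgam $X=X_{1}\cup_{X_{0}}X_{2}$ of median graphs along a common gated subgraph $X_{0}=X_{1}\cap X_{2}$ is median: since $X_{0}$ separates $X$, each $X_{i}$ is gated in $X$, and for $x,y\in X_{1}$ and $z\in X_{2}\setminus X_{0}$ one has $m_{X}(x,y,z)=m_{X_{1}}(x,y,z_{0})$ where $z_{0}$ is the gate of $z$ onto $X_{0}$, because every geodesic from $z$ into $X_{1}$ passes through $z_{0}$.

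The core is $(1)\Rightarrow(3)$, which I would prove by induction on $|V(X)|$, using the hyperplane machinery of median graphs (halfspaces $H^{\pm}$ of a $\Theta$-class $H$ are convex, hence gated; the carrier $N(H)$ is convex and isomorphic to $\partial^{-}H\times K_{2}$; $\partial^{\pm}H$ are convex and isomorphic via the edges of $H$). If $X$ is a single cube there is nothing to prove. For the inductive step, call a halfspace $H^{+}$ \emph{peripheral} if $H^{+}=\partial^{+}H$, and first establish the key lemma: \emph{if every halfspace of $X$ is peripheral, then $X$ is a single cube.} Indeed, fixing any $\Theta$-class $H$, peripherality gives $H^{\pm}=\partial^{\pm}H$, so $X=N(H)\cong\partial^{-}H\times K_{2}$; the $\Theta$-classes of $\partial^{-}H$ correspond bijectively to those of $X$ other than $H$, a short check shows $\partial^{-}H$ inherits the property that all of its halfspaces are peripheral, so $\partial^{-}H$ is a cube by induction and hence so is $X$. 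Now assume $X$ is not a cube; by the lemma some $\Theta$-class $H$ has a non-peripheral halfspace, which we orient so that $H^{+}\setminus\partial^{+}H\neq\emptyset$. Put $X_{1}=H^{-}\cup\partial^{+}H$, $X_{2}=H^{+}$, and $\partial^{+}H=:X_{0}$; then $X=X_{1}\cup X_{2}$, $X_{1}\cap X_{2}=X_{0}$, and both $X_{1},X_{2}$ are strictly smaller than $X$ (since $H^{-}\neq\emptyset$ and $H^{+}\neq\partial^{+}H$). The subgraph $X_{2}=H^{+}$ is convex in $X$, hence median; the subgraph $X_{1}$ is not convex, but it \emph{is} isometrically embedded in $X$ (using that $H^{-}$ and $\partial^{+}H$ are convex and matched across $H$) and closed under $m_{X}$ (a pigeonhole on how the three points distribute between $H^{-}$ and $\partial^{+}H$), hence median; and $X_{0}=\partial^{+}H$, being convex in $X$, is convex, hence gated, in each of $X_{1},X_{2}$. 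Thus $X=X_{1}\cup_{X_{0}}X_{2}$ is a gated amalgam of two strictly smaller median graphs, and applying the inductive hypothesis to $X_{1}$ and $X_{2}$ exhibits each, and hence $X$, as an iterated gated amalgam of cubes.

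Finally, for $(1)\Rightarrow(2)$: a finite median graph is bipartite and its Djokovi\'c--Winkler relation $\Theta$ is transitive, so it is a partial cube and embeds isometrically into the finite hypercube $Q_{N}$ with $N$ the number of $\Theta$-classes; it then remains to construct a retraction $Q_{N}\to X$. This can be done by repeatedly folding away a vertex of the current graph lying outside $X$ onto a vertex whose closed neighbourhood absorbs all of its neighbours — such a vertex exists as long as the current graph properly contains $X$, and this is exactly where the median structure enters (it fails for $Q_{N}$ itself) — or, alternatively, by inducting on the gated-amalgam decomposition produced in $(1)\Rightarrow(3)$ and gluing the retractions of $X_{1}$ and $X_{2}$ inside a suitable hypercube. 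I expect the main obstacle to be the key lemma in $(1)\Rightarrow(3)$, that ``all halfspaces peripheral'' forces a single cube, together with assembling the needed facts about $\Theta$-classes, carriers and gatedness in median graphs; verifying medianness of the pieces $H^{-}\cup\partial^{+}H$ and building the retraction in $(1)\Rightarrow(2)$ are secondary technical points.
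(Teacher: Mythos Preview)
The paper does not prove this theorem; it is quoted from the cited reference purely as motivation, with the remark that the reader is referred to the original sources even for the definitions. There is therefore no proof in the paper to compare your attempt against.

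Your outline is the classical route and is essentially sound. One simplification in $(1)\Rightarrow(3)$: contrary to what you write, the piece $X_{1}=H^{-}\cup\partial^{+}H$ is in fact \emph{convex} in $X$. Any geodesic with both endpoints in $H^{-}$ (resp.\ $\partial^{+}H$) stays there by convexity of halfspaces (resp.\ of $\partial^{+}H$); and a geodesic from $a\in H^{-}$ to $b\in\partial^{+}H$ crosses $H$ exactly once, landing at some $q\in\partial^{+}H$, after which the remaining subgeodesic from $q$ to $b$ stays in the convex set $\partial^{+}H$. So $X_{1}$ is convex, hence automatically gated and median, and your pigeonhole check of median-closedness is unnecessary. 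The genuinely soft spot in your plan is $(1)\Rightarrow(2)$: the sentence ``repeatedly fold away a vertex outside $X$ onto a vertex whose closed neighbourhood absorbs all of its neighbours'' is not a proof, and it is precisely here that one needs a real argument (e.g.\ iterated gate maps onto carriers of hyperplanes, or the induction along the gated-amalgam decomposition you produced in $(1)\Rightarrow(3)$, which is the cleaner option).
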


\begin{thm}\emph{\cite{WilkeitRetractsHamming, quasimedian}}
Let $X$ be a finite graph. The following statements are equivalent:
\begin{itemize}
	\item $X$ is quasi-median;
	\item $X$ is a rectract of a product of complete graphs;
	\item $X$ is obtained from prisms (ie., products of complete graphs) by gated amalgams.
\end{itemize}
\end{thm}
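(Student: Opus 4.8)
The plan is to prove the four implications ``retract of a product of complete graphs $\Rightarrow$ quasi-median'', ``quasi-median $\Rightarrow$ retract of a product of complete graphs'', ``gated amalgam of prisms $\Rightarrow$ quasi-median'' and ``quasi-median $\Rightarrow$ gated amalgam of prisms'', which together give the equivalence. For the first one I would begin by checking directly that a product of complete graphs $\prod_i K_{n_i}$ is quasi-median: two vertices are adjacent exactly when their coordinate tuples differ in a single entry, the graph distance is the number of differing entries, the triangle and quadrangle conditions are then inherited coordinatewise from the fact that each $K_{n_i}$ has diameter at most $1$, and the absence of induced $K_4^-$ and $K_{3,2}$ is a short finite check on coordinates. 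Then I would prove that the class of quasi-median graphs is closed under retracts, using that a retract $Y$ of a graph $Z$ with retraction $r\colon Z\to Y$ is an isometric induced subgraph --- apply $r$ to a geodesic of $Z$ between two vertices of $Y$ --- so that the exclusion of $K_4^-$ and $K_{3,2}$ passes to $Y$ for free (an induced subgraph of the induced subgraph $Y$ is induced in $Z$), while the triangle and quadrangle conditions pass to $Y$ by invoking the corresponding condition in $Z$ and pushing the witness vertex back through $r$, which cannot collapse it onto the vertices it has to be adjacent to, for distance reasons.

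For ``quasi-median $\Rightarrow$ retract of a product of complete graphs'' I would embed $X$ into the Hamming graph $\prod_J K_J$ having one factor $K_J$ per hyperplane $J$ of $X$, where $K_J$ is the complete graph on the set of sectors of $J$ (the connected components of the complement of $J$), sending a vertex $v$ to the tuple recording, for each $J$, the sector containing $v$. This map is injective because distinct vertices are separated by some hyperplane, it is a graph morphism because two adjacent vertices lie in the same sector of every hyperplane but the one separating them, and it is isometric because, in a quasi-median graph, a geodesic crosses a given hyperplane at most once, so that the length of a geodesic between $u$ and $v$ equals the number of hyperplanes separating them, which is the distance between the two images. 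It remains to produce a retraction $r$ of the Hamming graph onto the image, sending a tuple to the vertex of $X$ whose pattern of sectors agrees with it in the largest number of coordinates; proving that this vertex is unique and that $r$ is a graph morphism rests on a Helly-type property of the system of sectors of a quasi-median graph, and is the technical heart of this implication.

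The implication ``gated amalgam of prisms $\Rightarrow$ quasi-median'' I would prove by induction over the construction. Prisms are quasi-median by the first step, so it suffices to check that if $X=A\cup_C B$ is a gated amalgam with $A,B$ quasi-median and $C$ gated in both, then $X$ is quasi-median. Connectedness is clear. An induced $K_4^-$ or $K_{3,2}$ cannot have vertices in both $A\setminus C$ and $B\setminus C$: there are no edges between these two sets, which would create too many non-adjacent pairs, and in the remaining borderline shape some vertex of $A\setminus C$ would have two neighbours in $C$, impossible because a vertex outside a gated subgraph has at most one neighbour in it (namely its gate); hence the forbidden subgraph lies inside $A$ or inside $B$, contradicting the quasi-median property there (recall $A$ and $B$ are isometric in $X$, being gated). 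Finally the triangle and quadrangle conditions in $X$ reduce, through the gate map onto $C$, to the corresponding conditions in $A$ or in $B$; this is a routine but somewhat lengthy case analysis.

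For the last implication, ``quasi-median $\Rightarrow$ gated amalgam of prisms'', I would induct on $|V(X)|$, the base case being a single vertex. If $X$ is a prism, we are done. Otherwise the key lemma is that $X$ is a \emph{proper} gated amalgam $X=A\cup_C B$ of two gated subgraphs $A,B\subsetneq X$; granting it, $A$ and $B$ are quasi-median --- a gated subgraph is a retract via its gate map, and retracts of quasi-median graphs are quasi-median by the first step --- and have fewer vertices, so the induction closes. The proof of the key lemma goes through the theory of hyperplanes: each hyperplane $J$ of $X$ has a gated carrier $N(J)$ carrying a prism structure over a gated fibre, the sectors of $J$ are gated, and when $X$ is not a prism some hyperplane $J$ can be chosen with $N(J)\subsetneq X$; one then recombines $N(J)$ with the sectors of $J$ into the two halves of the amalgam. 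I expect this structural input --- the gatedness of carriers and of sectors, and the prism structure of carriers --- to be the main obstacle: it is precisely here that the weak modularity axioms, together with the exclusion of $K_4^-$ and $K_{3,2}$, are genuinely used, and it is the most delicate part of the whole argument.
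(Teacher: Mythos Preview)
The paper does not actually prove this theorem: it is stated with citations to Wilkeit and to Bandelt--Mulder--Wilkeit and then the exposition moves on to motivate the definition of hyperplanes. There is thus no in-paper argument to compare your proposal against.

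On its own merits, your outline follows the classical route of those references and is broadly sound, but two steps deserve tightening. In ``quasi-median $\Rightarrow$ retract of a Hamming graph'', your proposed retraction --- send a tuple to the vertex of $X$ agreeing in the most coordinates --- is really the gate map onto the image, and its well-definedness is equivalent to the image being \emph{gated} in the Hamming graph; in the original sources this is handled via closure under the ternary quasi-median operation rather than a bare nearest-point argument, and you should expect to need that algebraic input. In ``quasi-median $\Rightarrow$ gated amalgam of prisms'', your plan to ``recombine $N(J)$ with the sectors of $J$ into two halves'' is imprecise when $J$ delimits more than two sectors: you must explain which gated subgraph plays the role of the amalgamated part $C$ and why both sides are proper and gated. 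Invoking the hyperplane machinery (gated carriers, gated sectors, prism structure of carriers) here is legitimate, since those facts are established directly from the weak-modularity axioms and the forbidden subgraphs and do not depend on the characterisation you are proving.
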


\noindent
Roughly speaking, if one says that CAT(0) cube complexes or median graphs are obtained by gluing cubes together in a ``nonpositively-curved way'', then quasi-median graphs are obtained by gluing prisms together in a ``nonpositively-curved way''. So edges become cliques, and cubes become prisms. This analogy motivates the following definition, which mimics the definition of hyperplanes in CAT(0) cube complexes.

\begin{definition}\label{def:hyperplanes}
A \emph{hyperplane} is an equivalence class of edges, where two edges $e$ and $e'$ are said \emph{equivalent} whenever there exists a sequence of edges $e_0=e,e_1, \ldots, e_{n-1},e_n=e'$ such that, for every $1 \leq i \leq n-1$, either $e_i$ and $e_{i+1}$ are opposite sides of some square or they are two sides of some triangle. Alternatively, if we say that two cliques are \emph{parallel} whenever they respectively contain two opposite sides of some square, then a hyperplane is the collection of edges of some class cliques with respect to the transitive closure of parallelism.
\end{definition}

\noindent
One says that an edge or a clique is \emph{dual} to a given hyperplane if it belongs to the associated class of edges. Of course, because two distinct equivalence classes are necessarily disjoint, an edge or a clique is dual to a unique hyperplane. 

\begin{definition}
The \emph{carrier} of hyperplane $J$, denoted by $N(J)$, is the subgraph generated by the union of all the edges of $J$. A \emph{fiber} of $J$ is a connected component of $\partial J = N(J) \backslash \backslash J$, ie., the subgraph obtained from $N(J)$ by removing the interiors of the edges of $J$. 
\end{definition}

\noindent
Now, the point is that, in the same way that the geometry of a CAT(0) cube complex reduces to the combinatorics of its hyperplanes, the geometry of a quasi-median graph reduces as well to the combinatorics of its hyperplanes. More precisely:

\begin{thm}\emph{\cite[Proposition 2.15 and 2.30]{Qm}}
Let $X$ be a quasi-median graph. For every hyperplane $J$, the following statements hold.
\begin{itemize}
	\item The subgraph $X \backslash \backslash J$ obtained from $X$ by removing the interiors of the edges of $J$ is disconnected, possibly with infinitely many connected components. Each such component, called a \emph{sector}, is gated. 
	\item A fiber of $J$ is a gated subgraph; in particular, it is a quasi-median graph on its own right.
	\item The carrier of $J$ is naturally isometric to a product $F \times C$ where $F$ is an arbitrary fiber of $F$ and $C$ an arbitrary clique dual to $J$. Furthermore, $N(J)$ is a gated subgraph.
\end{itemize}
Moreover, a path in $X$ is a geodesic if and only if it crosses at most once each hyperplane. As a consequence, the distance between any two vertices of $X$ coincides with the number of hyperplanes which separate them. 
\end{thm}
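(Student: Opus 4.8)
The plan is to reduce everything to two elementary facts about the local combinatorics of $X$---every edge lies in a unique maximal clique, and every clique is gated---and then to bootstrap from cliques to hyperplanes via their carriers, following the pattern familiar from CAT(0) cube complexes. First I would unpack the excluded subgraphs: if two triangles share an edge $ab$ with third vertices $c,d$, then $cd$ must be an edge, for otherwise $\{a,b,c,d\}$ induces $K_4^-$; iterating, the common neighbours of the two endpoints of any edge span, together with that edge, a clique, so every edge lies in a unique maximal clique. Next, fix a maximal clique $C$ and a vertex $x$ with $d(x,C)=d$. If $c_1,c_2\in C$ both realised this distance, the triangle condition would produce a common neighbour $y$ of $c_1,c_2$ with $d(x,y)=d-1$, and excluding $K_4^-$ would force $y$ adjacent to every vertex of $C$, contradicting maximality; hence there is a unique closest vertex $\mathrm{proj}_C(x)$, and since $d(x,c)\in\{d,d+1\}$ for all $c\in C$ this vertex lies on a geodesic from $x$ to each vertex of $C$, so $C$ is gated. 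Feeding the quadrangle condition into the same kind of argument builds the square structure and shows that a product of cliques embeds isometrically and is gated---the prism picture underlying the carrier.

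The heart of the proof is the sector decomposition. Fix a hyperplane $J$, a clique $C$ dual to $J$ with vertex set $\{c_1,\dots,c_k\}$, and put $V_i=\mathrm{proj}_C^{-1}(c_i)$, which partitions $V(X)$. I would establish three things: \textbf{(a)} each $V_i$ is connected (push any of its vertices towards $c_i$ by the triangle condition, staying inside $V_i$) and gated (it is an intersection of convex ``half-spaces'' seen by the projections onto the gated cliques one meets along the way); \textbf{(b)} an edge is dual to $J$ precisely when its endpoints lie in distinct $V_i$'s, every other edge having both endpoints in one $V_i$; and \textbf{(c)} the partition $\{V_i\}$ depends only on $J$, not on the chosen dual clique $C$. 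For \textbf{(c)}, given another clique $C'$ dual to $J$ one joins $C$ to $C'$ by a chain of parallel cliques, and at each step the quadrangle condition identifies the two partitions; this also produces the canonical parallelism bijection between cliques dual to $J$. Granting \textbf{(a)}--\textbf{(c)}, the $V_i$ are exactly the connected components of $X\backslash\backslash J$, and they are gated, which is the first bullet.

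For the second bullet, a fiber $F$ of $J$ is the intersection of the carrier $N(J)$ with a single sector; since an intersection of gated subgraphs is gated, and $N(J)$ is gated (from the prism picture, or a direct projection argument), $F$ is gated, hence isometrically embedded. It therefore inherits weak modularity---any witness vertex supplied in $X$ by the triangle or quadrangle condition lies on a geodesic between vertices of $F$, hence in $F$---and it inherits the absence of induced $K_4^-$ and $K_{3,2}$, so $F$ is itself quasi-median. For the third bullet I would construct the isometry $N(J)\cong C\times F$ explicitly: every vertex of $N(J)$ lies on a unique clique dual to $J$ and in a unique fiber, and sending it to the pair consisting of its partner in $C$ (via the parallelism bijection of \textbf{(c)}) and its image in a fixed fiber $F$ is a well-defined bijection; that it is a graph isomorphism and an isometry is read off the square/prism structure, using that distances in $N(J)$ coincide with those in $X$.

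Finally, the geodesic criterion. That a geodesic crosses each hyperplane at most once is now immediate: crossing $J$ means passing, along an edge of $J$, from one sector of $X\backslash\backslash J$ to another by \textbf{(b)}, and a geodesic that left a gated---hence convex---sector and later re-entered it could be shortened. Conversely, a path crossing every hyperplane at most once has length equal to the number of hyperplanes it crosses, which is at least the number separating its endpoints, since each such hyperplane must be crossed; as any geodesic attains this lower bound, $d(x,y)$ equals the number of hyperplanes separating $x$ and $y$. The genuinely delicate points are \textbf{(a)} and \textbf{(c)}---the connectedness and gatedness of sectors, and the independence of the partition from the chosen dual clique---which are exactly where one must run an induction on distance and invoke the quadrangle condition; the rest is bookkeeping with gatedness, which is why isolating ``cliques are gated'' at the outset carries most of the weight.
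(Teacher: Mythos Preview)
The paper itself does not prove this theorem; it is a survey and simply cites \cite[Propositions~2.15 and~2.30]{Qm}. Your outline follows the standard route that one finds in that reference: every edge lies in a unique maximal clique, cliques are gated, sectors are the fibres of the projection onto a dual clique, and the carrier decomposes as a product. Your identification of \textbf{(a)} and \textbf{(c)} as the genuinely delicate steps is accurate, and the remaining bookkeeping with gatedness is handled correctly.

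There is, however, one real gap in your argument for the geodesic criterion. You write that a geodesic cannot cross $J$ twice because ``a geodesic that left a gated---hence convex---sector and later re-entered it could be shortened.'' This handles only the case where the two crossings return to the same sector. Since hyperplanes in quasi-median graphs may delimit three or more sectors, a path can cross $J$ twice by going $V_a\to V_b\to V_c$ with $a,b,c$ pairwise distinct, never re-entering any sector; convexity of the individual sectors does not exclude this. (Already in a single triangle $K_3$ the path $a\to b\to c$ crosses the unique hyperplane twice without re-entering a sector.) The fix uses the product structure $N(J)\cong F\times C$ that you have already established in the third bullet: if two consecutive $J$-crossings occur at the cliques $\{f_1\}\times C$ and $\{f_2\}\times C$, the portion of the path between them has length at least $2+d_F(f_1,f_2)$, whereas replacing the first crossed edge by the edge from $(f_1,c_a)$ to $(f_1,c_c)$ in the same clique and then running inside the fibre over $c_c$ to $(f_2,c_c)$ costs only $1+d_F(f_1,f_2)$. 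Equivalently, one shows that once a geodesic enters a sector it never leaves it, which is strictly stronger than convexity of sectors and is what your argument actually needs.
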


\noindent
Recall that, given a graph $X$ and a subgraph $Y \subset X$, a \emph{gate} in $Y$ for some vertex $x \in X$ is a vertex $y \in Y$ such that, for every $z \in Y$, there exists a geodesic between $x$ and $z$ passing through $y$. If every vertex of $X$ has a gate in $Y$, then $Y$ is \emph{gated}. It is worth noticing that a gated subgraph must be convex. In fact, gated subgraphs in quasi-median graphs play the same role as convex subcomplexes in CAT(0) cube complexes. The reason is that one can project vertices onto gated subgraphs, just by taking the corresponding gates: note that the gate, when it exists, is the unique vertex of the subgraph minimising the distance to the initial vertex. In median graphs, convex subgraphs are always gated, but it is no longer true in quasi-median graphs: an edge in a triangle is obviously convex, but the vertex of our triangle which does not belong to this edge have two nearest-point projections. For more information on projections onto gated subgraphs in quasi-median graphs, we refer to \cite[Section 2.3]{Qm}. 

\medskip \noindent
Let us mention that, in the same way that median graphs can be filled in to get a CAT(0) space, \emph{quasi-median complexes}, ie., prism complexes obtained from quasi-median graphs by filling in every clique with a simplex and every one-skeleton of an $n$-cube with an $n$-cube, define CAT(0) spaces as well.

\begin{thm}\emph{\cite[Theorem 2.120]{Qm}}
Quasi-median complexes are CAT(0).
\end{thm}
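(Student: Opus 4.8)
The plan is to run the standard machine for non-positively curved piecewise-Euclidean complexes: equip the quasi-median complex with a natural metric, check that it is simply connected, verify Gromov's link condition, and then invoke the Cartan--Hadamard theorem for $M_\kappa$-complexes. The point is that the three defining conditions of a quasi-median graph (weak modularity together with the exclusion of $K_4^-$ and $K_{3,2}$ as induced subgraphs) are precisely what is needed to make the vertex links non-positively curved; everything else is the general theory.

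First I would metrise the quasi-median complex $X$ as a piecewise-Euclidean $M_0$-complex, declaring every prism (a product of cliques) $C_1 \times \cdots \times C_k$ to be isometric to the product $\Delta_1 \times \cdots \times \Delta_k$ of regular Euclidean simplices of unit edge length. Each prism is then a finite product of compact convex Euclidean cells, hence is itself $\mathrm{CAT}(0)$. When $X$ is finite-dimensional with finitely many isometry types of prisms, Bridson's theorem already guarantees that $X$ is a complete geodesic space; in general one reduces to this situation by exhausting $X$ by finite-dimensional gated --- hence, by the structure theorem quoted above, convex --- subcomplexes and passing to the direct limit, using that an increasing union of convex $\mathrm{CAT}(0)$ subspaces is $\mathrm{CAT}(0)$. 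Simple connectedness is then dealt with at the level of the $2$-skeleton: it suffices to show that every combinatorial cycle in the one-skeleton bounds a disc diagram built out of triangles and squares, which is a standard consequence of weak modularity and can also be extracted from the structure theorem --- a geodesic crosses each hyperplane at most once, so a minimal-area disc diagram over a cycle can be simplified by pushing it across triangles (via the triangle condition) and squares (via the quadrangle condition) until it becomes trivial. Alternatively one may induct on the gated-amalgam decomposition, using that a gated amalgam of simply connected complexes glued along a connected subcomplex is simply connected.

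The core of the proof is the remaining step: the link $\mathrm{lk}(v)$ of every vertex $v$ --- equivalently, of every cell --- must be shown to be a $\mathrm{CAT}(1)$ spherical complex. This link is an $M_1$-complex whose top-dimensional cells are spherical joins $\sigma_1 \ast \cdots \ast \sigma_k$, where $\sigma_i$ is the link of $v$ in a factor clique, i.e.\ a regular spherical simplex all of whose edges have length $\pi/3$; these joins are glued along common faces according to how the prisms through $v$ overlap. Here the combinatorics enters: the absence of an induced $K_4^-$ forces the maximal cliques of $X$ to be genuinely "filled" (each edge lies in a unique maximal clique, which contributes a full simplex to $\mathrm{lk}(v)$), while the absence of an induced $K_{3,2}$, together with the triangle and quadrangle conditions, forces two cliques that look "parallel at $v$" actually to span a prism, so that the local join structure of $\mathrm{lk}(v)$ has no spurious short identifications. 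Granting this, I would conclude that $\mathrm{lk}(v)$ is $\mathrm{CAT}(1)$ either by a direct systole estimate --- showing every closed locally geodesic loop has length at least $2\pi$, the excluded induced subgraphs being exactly what prevents a shorter essential loop (an "empty triangle" mixing $\pi/3$- and $\pi/2$-edges) --- or, more in keeping with the structure theorem, by realising $\mathrm{lk}(v)$ as a complex built by gluing, along convex subcomplexes, links of vertices in prisms (which are joins of $\mathrm{CAT}(1)$ spherical simplices, hence $\mathrm{CAT}(1)$ by the join theorem) and applying the spherical form of Reshetnyak's gluing theorem.

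The main obstacle is precisely this link condition: one has to translate the purely combinatorial hypotheses on the graph into the metric statement that every vertex link has systole at least $2\pi$, which amounts to enumerating the potential short essential loops in a link and checking that weak modularity together with the two forbidden induced subgraphs rules each of them out. Once the link condition is in hand, the $M_\kappa$-complex version of the Cartan--Hadamard theorem (Gromov; Bridson--Haefliger) upgrades "simply connected plus locally $\mathrm{CAT}(0)$" to "globally $\mathrm{CAT}(0)$", which completes the argument.
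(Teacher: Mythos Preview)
The present paper does not prove this theorem: it is stated without argument and attributed to \cite[Theorem~2.120]{Qm}, so there is no proof here to compare your proposal against. That said, your outline follows the expected route---metrise each prism as a product of regular Euclidean simplices, deduce simple connectedness from weak modularity (equivalently, from the hyperplane/gated-amalgam structure), verify the $\mathrm{CAT}(1)$ link condition, and apply the Cartan--Hadamard theorem for $M_\kappa$-complexes---and you correctly isolate the link condition as the step where the combinatorial hypotheses (no induced $K_4^-$, no induced $K_{3,2}$, weak modularity) do the real work; this is indeed the strategy carried out in \cite{Qm}.
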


\noindent
It is worth noticing that a CAT(0) cube complex can be naturally associated to any quasi-median graph. Indeed, endow a given quasi-median graph $X$ with a structure of wallspace by declaring that a sector and its complement is wall, and next cubulate this wallspace to get a CAT(0) cube complex $C(X)$. See Figure \ref{figure1} for an example. Then the canonical map $X \to C(X)$ is a quasi-isometry, which is furthermore equivariant, meaning that if a group acts on $X$ then it naturally acts on $C(X)$ as well. As a consequence, every group acting geometrically (resp. properly, fixed-point freely) on a quasi-median graph acts geometrically (resp. properly, fixed-point freely) on a CAT(0) cube complex. 
\begin{figure}
\begin{center}
\includegraphics[scale=0.5]{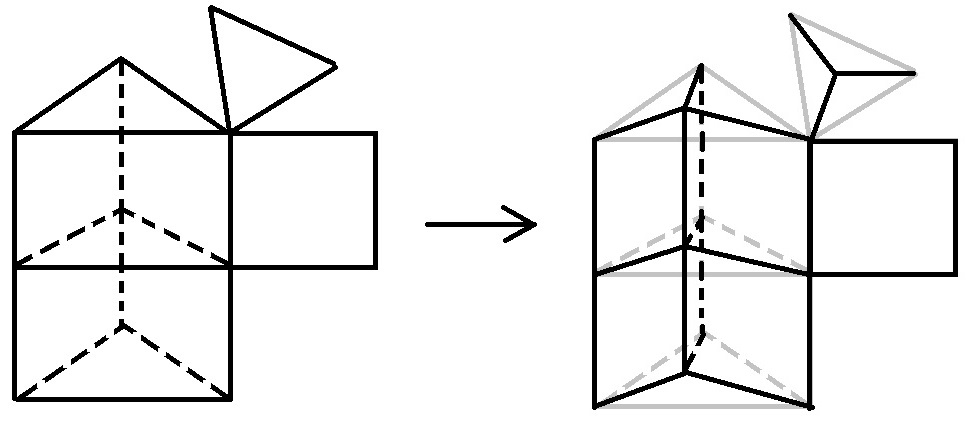}
\caption{CAT(0) cube complex associated to a quasi-median graph.}
\label{figure1}
\end{center}
\end{figure}

\noindent
Therefore, a natural question is: why do we care about quasi-median graphs? The first reason is that a quasi-median graph may appear more naturally than a CAT(0) cube complex, making the quasi-median graph easier to handle. For instance, we are convinced that quasi-median graphs provide the good framework to study the geometry of graph products (see Section \ref{section:QmEx}). The second reason is that one can exploit the specific structure of hyperplanes in quasi-median graphs to define particular kinds of actions on such graphs which provide interesting information on the group (see Sections \ref{section:rotative}, \ref{section:topicalI} and~\ref{section:topicalII}).

\section{Quasi-median graphs appear in Nature}\label{section:QmEx}

\noindent
In this section, our goal is to describe three classes of groups which act naturally on quasi-median graphs: graph products, (some) wreath products, and diagram products.

\paragraph{Graph products.} Let us begin by recalling the definition of graph products, as introduced in \cite{GreenGP}:

\begin{definition}
Let $\Gamma$ be a simplicial graph and $\mathcal{G}$ a collection of groups indexed by the vertices of $\Gamma$. The \emph{graph product} $\Gamma \mathcal{G}$ is the group defined by the relative presentation
$$\langle G_u, u \in V(\Gamma) \mid [G_u,G_v]=1, (u,v) \in E(\Gamma) \rangle,$$
where $[G_u,G_v]=1$ is an abbreviation for: $[g,h]=1$ for every $g \in G_u$ and $h \in G_v$. 
\end{definition}

\noindent
Notice that, if $\Gamma$ has no edges, then $\Gamma \mathcal{G}$ coincides with the free product of all the groups of $\mathcal{G}$; and if $\Gamma$ is complete graph, then $\Gamma \mathcal{G}$ coincides with the direct sum of all the groups of $\mathcal{G}$. Usually, one says that graph products interpolate between free products and direct sum. Next, notice also that, if all the groups of $\mathcal{G}$ are infinite cyclic, we recover right-angled Artin groups; and if all the groups of $\mathcal{G}$ are cyclic of order two, we recover the right-angled Coxeter groups. 

\begin{thm}\emph{\cite[Proposition 8.2]{Qm}}
Let $\Gamma$ be a simplicial graph and $\mathcal{G}$ a collection of groups indexed by $V(\Gamma)$. The Cayley graph $X(\Gamma, \mathcal{G})$ of $\Gamma \mathcal{G}$ associated to the generating set $\bigsqcup\limits_{G \in \mathcal{G}} G \backslash \{1 \}$ is a quasi-median graph.
\end{thm}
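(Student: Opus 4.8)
The plan is to show that the Cayley graph $X = X(\Gamma, \mathcal{G})$ satisfies the defining conditions of a quasi-median graph directly: it is connected (obvious, since the generating set generates), it is weakly modular, and it contains neither $K_4^-$ nor $K_{3,2}$ as an induced subgraph. The essential tool throughout will be the normal form theorem for graph products (Green, \cite{GreenGP}): every element $g \in \Gamma\mathcal{G}$ can be written as a product $g = g_1 \cdots g_n$ of \emph{syllables} $g_i \in G_{u_i} \setminus \{1\}$, and any two reduced expressions for $g$ differ by a sequence of \emph{shuffles} (swapping adjacent syllables $g_i g_{i+1}$ with $g_{i+1} g_i$ when $(u_i, u_{i+1}) \in E(\Gamma)$); moreover the integer $n$, the \emph{syllable length}, is well defined. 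Since $X$ is vertex-transitive under the left action of $\Gamma\mathcal{G}$, it suffices to verify all conditions relative to the basepoint $1$. The key preliminary observation is that $d_X(1, g)$ equals the syllable length of $g$: indeed, an edge from $h$ to $h'$ corresponds to $h^{-1}h' \in G_u \setminus\{1\}$ for some $u$, so a path from $1$ to $g$ of length $\ell$ gives an expression of $g$ as a product of $\ell$ syllables, hence syllable length $\le \ell$; conversely a reduced expression gives a path of length equal to the syllable length. A useful refinement: appending a syllable $s \in G_u\setminus\{1\}$ to a reduced word $g_1\cdots g_n$ either increases syllable length by one (when $u$ does not appear as the support of a ``rightmost-movable'' syllable), or leaves it unchanged (when some $g_i$ with $u_i = u$ can be shuffled to the end and merged with $s$, or cancelled if $s = g_i^{-1}$).

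First I would establish the triangle condition. Given $1$, and two adjacent vertices $v, w$ at distance $k$ from $1$: adjacency means $v^{-1}w \in G_u \setminus \{1\}$ for a single vertex $u$ of $\Gamma$. Writing $v$ in reduced form and comparing with $w = v\cdot(v^{-1}w)$, since both have syllable length $k$, the normal form theory forces $v$ to have a reduced expression ending in a syllable $s\in G_u\setminus\{1\}$ (i.e. $u$ is ``movable to the right'' in $v$), and then $v^{-1}w = s^{-1}t$ with $t\in G_u\setminus\{1\}$; the common neighbour is $x = v s^{-1}$ (equivalently $w t^{-1}$), which lies in the same clique-coset $v' G_u$ where $v'$ is $v$ with the final $G_u$-syllable removed, and $d_X(1, x) = k-1$. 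For the quadrangle condition: given $1, z$ at distance $k$, and neighbours $v, w$ of $z$ at distance $k-1$ with $v^{-1}z \in G_u\setminus\{1\}$ and $w^{-1}z \in G_t\setminus\{1\}$ — here $u \ne t$, else $v, w, z$ would lie in a common clique and $v^{-1}w \in G_u$ would be an edge, reducing to the triangle case. Using normal forms for $z$, the fact that both $v$ and $w$ are obtained by shortening $z$ by one syllable (in the $G_u$-direction and the $G_t$-direction respectively) forces the terminal $G_u$- and $G_t$-syllables of $z$ to \emph{both} be shufflable to the end, which (by the shuffle characterisation) requires $(u,t) \in E(\Gamma)$; the common neighbour $x$ is then $z$ with both terminal syllables removed, at distance $k-2$.

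Finally, the forbidden-subgraph conditions. A $K_4^-$ (four vertices, all edges but one) induced in $X$: up to translation one vertex is $1$, and its two neighbours in the $K_4^-$, say $a$ and $b$, are non-adjacent to each other, while both are adjacent to a fourth vertex $c$; here $a, b \in \bigsqcup G\setminus\{1\}$, and $a^{-1}c$, $b^{-1}c$ are single syllables. Chasing supports: $a \in G_u$, $b\in G_{u'}$; if $u = u'$ then $a, b, 1$ lie in a common clique, so $a$ and $b$ would be adjacent, contradiction; if $u \ne u'$, one checks that $c$ having syllable-length-one words to both $a$ and $b$ forces $c$ to be supported on $\{u, u'\}$ with $(u,u')\in E(\Gamma)$, making $1, a, b, c$ a full square ($K_4$ minus two edges, i.e. a $4$-cycle) rather than $K_4^-$ — a short case analysis on syllable lengths rules out the remaining configuration. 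For $K_{3,2}$ (parts $\{x_1,x_2,x_3\}$ and $\{y_1,y_2\}$, complete bipartite): translate so $y_1 = 1$; then $x_1, x_2, x_3$ are three distinct generators, and $y_2$ is adjacent to all three. If two of the $x_i$ have the same support $u$, then $x_1, x_2, x_3$ are not pairwise... (they needn't be, in $K_{3,2}$ the $x_i$ are mutually non-adjacent) — so same-support $x_i$'s must actually be non-adjacent, but any two elements of $G_u\setminus\{1\}$ are adjacent; hence the $x_i$ have three \emph{distinct} supports $u_1, u_2, u_3$. Then $y_2$ adjacent to each $x_i$ means $x_i^{-1} y_2$ is a single syllable for $i = 1,2,3$; expanding $y_2$ in normal form and using that $y_2$ agrees with $x_i$ after one syllable in three different directions leads quickly to a contradiction via syllable length (the syllable length of $y_2$ cannot simultaneously be compatible with all three). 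The main obstacle, and the step requiring the most care, is precisely this coordinated bookkeeping with normal forms in the quadrangle condition and the $K_{3,2}$-exclusion — making rigorous the intuition that ``shortening in direction $u$ and independently shortening in direction $t$'' forces $u$ and $t$ to commute, and that one cannot shorten in three independent directions at once. All of this is exactly the content of \cite[Proposition 8.2]{Qm}, and the cleanest route is to identify the clique-cosets $gG_u$ with the roles played by hyperplane-carriers and reduce each axiom to a statement about the poset of syllable-reductions of a single group element.
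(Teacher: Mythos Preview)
This survey paper does not actually prove the theorem; it simply cites \cite[Proposition 8.2]{Qm}. So there is no in-paper argument to compare against. Your approach --- verify weak modularity and the two forbidden subgraphs directly from Green's normal form theorem, using that $d_X(1,g)$ equals the syllable length of $g$ --- is exactly the natural one, and your treatments of the triangle condition, the quadrangle condition, and the $K_{3,2}$-exclusion are correct as sketched.

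There is, however, a genuine slip in your $K_4^-$ case. In $K_4^-$ the missing edge joins the two vertices of degree~$2$; the two degree-$3$ vertices \emph{are} adjacent. So if you translate a degree-$2$ vertex to $1$, its two neighbours $a,b$ are the degree-$3$ vertices and $a,b$ \emph{are} adjacent, contrary to what you wrote. The argument should then run: $a^{-1}b$ a single syllable with $a\in G_u$, $b\in G_{u'}$ forces $u=u'$; the fourth vertex $c$ (the other degree-$2$ vertex) is adjacent to $a,b$ but not to $1$, hence has syllable length $2$, and writing $c=a\alpha=b\beta$ with $\alpha\in G_v$, $\beta\in G_{v'}$ and $v,v'\neq u$ gives $b^{-1}a=\beta\alpha^{-1}$, an equality between a nontrivial element of $G_u$ and an element supported off $u$ --- contradiction. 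Alternatively, place $1$ at a degree-$3$ vertex: then $1$ has \emph{three} neighbours $a,b,c$, with $a,b$ the non-adjacent pair; the adjacencies $a\sim c$ and $b\sim c$ force the support of $c$ to equal both that of $a$ and that of $b$, which is impossible once you have shown $a,b$ have distinct supports. Either route is short; your write-up just has the adjacency pattern of $K_4^-$ backwards, which makes the subsequent ``$4$-cycle rather than $K_4^-$'' conclusion incoherent as stated. Fix the combinatorics of $K_4^-$ and the proof goes through.
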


\noindent
Graph products are our examples where the link between groups and graphs is the strongest. In \cite{Qm}, one of our major contributions to the study of geometric properties of graph products is the characterisation of relatively hyperbolic graph products. Before stating our theorem, we need to introduce some vocabulary.

\medskip \noindent
Given a finite simplicial graph $\Gamma$ and a collection of groups $\mathcal{G}$ indexed by $V(\Gamma)$, we will say that a subgraph $\Lambda \leq \Gamma$ is \emph{vast} if the subgroup of $\Gamma \mathcal{G}$ generated by the vertex-groups corresponding to the vertices of $\Lambda$, ie., $\Lambda \mathcal{G}$, is infinite; otherwise, $\Lambda$ is said \emph{narrow}. Notice that a subgraph is narrow if and only if it is complete and all the vertex-groups labelling its vertices are finite. A join $\Lambda_1 \ast \Lambda_2 \leq \Gamma$ is \emph{large} if both $\Lambda_1$ and $\Lambda_2$ are vast. 

\begin{definition}
Let $\Gamma$ be a finite simplicial graph and $\mathcal{G}$ a collection of groups labelled by $V(\Gamma)$. For every subgraph $\Lambda \subset \Gamma$, let $\mathrm{cp}(\Lambda)$ denote the subgraph of $\Gamma$ generated by $\Lambda$ and the vertices $v \in \Gamma$ such that $\mathrm{link}(v) \cap \Lambda$ is vast. Now, define the collection of subgraphs $\mathfrak{J}^n(\Gamma)$ of $\Gamma$ by induction in the following way:
\begin{itemize}
	\item $\mathfrak{J}^0(\Gamma)$ is the collection of all the large joins in $\Gamma$;
	\item if $C_1, \ldots, C_k$ denote the connected components of the graph whose set of vertices is $\mathfrak{J}^n(\Gamma)$ and whose edges link two subgraphs with vast intersection, we set $\mathfrak{J}^{n+1}(\Gamma) = \left( \mathrm{cp} \left( \bigcup\limits_{\Lambda \in C_1} \Lambda \right), \ldots, \mathrm{cp} \left( \bigcup\limits_{\Lambda \in C_k} \Lambda \right) \right)$.
\end{itemize}
Because $\Gamma$ is finite, the sequence $(\mathfrak{J}^n(\Gamma))$ must eventually be constant and equal to some collection $\mathfrak{J}^{\infty}(\Gamma)$. Finally, let $\mathfrak{J}(\Gamma)$ denote the collection of subgraphs of $\Gamma$ obtained from $\mathfrak{J}^{\infty}(\Gamma)$ by adding the singletons corresponding to the vertices of $\Gamma \backslash \bigcup \mathfrak{J}^{\infty}(\Gamma)$.  
\end{definition}

\begin{thm}\emph{\cite[Theorem 8.35]{Qm}}
Let $\Gamma$ be a finite simplicial graph not reduced to a single vertex and $\mathcal{G}$ a collection of finitely generated groups labelled by $V(\Gamma)$. The graph product $\Gamma \mathcal{G}$ is relatively hyperbolic if and only if $\mathfrak{J}(\Gamma) \neq \{ \Gamma \}$. If so, $\Gamma \mathcal{G}$ is hyperbolic relatively to $\{ \Lambda \mathcal{G} \mid \Lambda \in \mathfrak{J}(\Gamma) \}$. 
\end{thm}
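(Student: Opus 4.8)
\noindent
The plan is to have $\Gamma\mathcal{G}$ act on the quasi-median graph $X := X(\Gamma,\mathcal{G})$ and to reduce relative hyperbolicity to a combinatorial condition on hyperplanes. For a subgraph $\Lambda \subseteq \Gamma$, the Cayley graph $X(\Lambda,\mathcal{G})$ of the parabolic subgroup $\Lambda\mathcal{G}$ embeds in $X$ as a gated subgraph with stabiliser $\Lambda\mathcal{G}$, and the $\Gamma\mathcal{G}$-translates of such subgraphs are exactly the subgraphs dual to the families of hyperplanes ``supported on $\Lambda$''. (The hypothesis that $\Gamma$ has at least two vertices discards the trivial case $\Gamma\mathcal{G}=G_v$, where relative hyperbolicity would be governed by $G_v$ alone.) I would then invoke the following criterion, available from the quasi-median machinery of \cite{Qm} (a cube-complex-style avatar of the characterisations of Bowditch, Farb and Osin): a finitely generated group acting on a quasi-median graph $Y$ is hyperbolic relative to the stabilisers of an invariant family $\mathcal{F}$ of gated subgraphs provided (i) there is a constant $N_0$ so that every grid of hyperplanes of diameter larger than $N_0$ lies inside a member of $\mathcal{F}$ --- equivalently, coning off the members of $\mathcal{F}$ yields a hyperbolic graph --- and (ii) $\mathcal{F}$ is geometrically separated, i.e.\ for each $L$ there is $B(L)$ such that two distinct members of $\mathcal{F}$ never $L$-fellow-travel along a subset of diameter larger than $B(L)$. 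The inductive definition of $\mathfrak{J}(\Gamma)$ is engineered precisely so that $\mathcal{F} := \{ gX(\Lambda,\mathcal{G}) \mid g \in \Gamma\mathcal{G},\ \Lambda \in \mathfrak{J}(\Gamma) \}$ satisfies (i) and (ii).

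\noindent
\emph{The ``if'' direction.} For (i): an unbounded grid of hyperplanes of $X$ corresponds, through the dictionary between hyperplanes of $X(\Gamma,\mathcal{G})$ and syllables of elements of $\Gamma\mathcal{G}$, to a product $\Lambda_1\mathcal{G} \times \Lambda_2\mathcal{G}$ with $\Lambda_1 \ast \Lambda_2$ a large join of $\Gamma$. Since $\mathfrak{J}^0(\Gamma)$ contains every large join, and the operations $\mathrm{cp}(\cdot)$, taking unions over vast-intersection components, and adjoining leftover singletons only enlarge the members, every large join sits inside some $\Lambda \in \mathfrak{J}(\Gamma)$, hence so does the grid. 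For (ii): if $gX(\Lambda,\mathcal{G})$ and $g'X(\Lambda',\mathcal{G})$ fellow-travel along an unbounded set, normal forms in graph products (intersections of parabolics are parabolic) force their coarse intersection to lie within bounded distance of a coset $k\Delta\mathcal{G}$ with $\Delta$ vast; one then argues this is impossible unless $\Lambda=\Lambda'$ and $g' \in g\Lambda\mathcal{G}$, using that distinct members of $\mathfrak{J}^{\infty}(\Gamma)$ have narrow intersection (the merging step) and that each $\Lambda \in \mathfrak{J}(\Gamma)$ already contains every vertex $v$ with $\mathrm{link}(v)\cap\Lambda$ vast (the effect of $\mathrm{cp}$). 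With (i) and (ii), the criterion gives that $\Gamma\mathcal{G}$ is hyperbolic relative to $\{ \Lambda\mathcal{G} \mid \Lambda \in \mathfrak{J}(\Gamma) \}$; when $\mathfrak{J}(\Gamma) \neq \{\Gamma\}$ these are \emph{proper} subgroups, so $\Gamma\mathcal{G}$ is genuinely relatively hyperbolic.

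\noindent
\emph{The ``only if'' direction.} Suppose $\Gamma\mathcal{G}$ is hyperbolic relative to a collection $\mathcal{P}$ of proper subgroups, and replay the construction of $\mathfrak{J}$ ``inside $\mathcal{P}$'' using three standard facts about relatively hyperbolic groups: (1) a direct product of two infinite finitely generated subgroups is conjugate into some member of $\mathcal{P}$ (wide subgroups are peripheral), which places every large join's parabolic into a peripheral subgroup; (2) if $\mathrm{link}(v)\cap\Lambda$ is vast and $\Lambda\mathcal{G}$ is conjugate into $P \in \mathcal{P}$, then $G_v$ is conjugate into the same $P$, since otherwise a suitable conjugate of $\Lambda\mathcal{G}$ and a $G_v$-translate of it would lie in two distinct peripheral cosets whose coarse intersection contains the infinite subgroup $(\mathrm{link}(v)\cap\Lambda)\mathcal{G}$, contradicting bounded coset penetration --- and this holds even if $G_v$ is finite; (3) two peripheral subgroups with infinite intersection coincide up to conjugacy, which lets one choose the conjugators coherently along vast-intersection components, mirroring the merging step. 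An induction along the construction then yields that $\Lambda\mathcal{G}$ is conjugate into a member of $\mathcal{P}$ for every $\Lambda \in \mathfrak{J}^{\infty}(\Gamma)$. If $\mathfrak{J}(\Gamma) = \{\Gamma\}$ then $\Gamma \in \mathfrak{J}^{\infty}(\Gamma)$, so the whole group $\Gamma\mathcal{G}$ is conjugate into --- hence equal to --- a proper peripheral subgroup, which is absurd; therefore $\mathfrak{J}(\Gamma) \neq \{\Gamma\}$.

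\noindent
\emph{Expected main obstacle.} The technical core is the quantitative form of step (i): showing that coning off the translates of the subgraphs $X(\Lambda,\mathcal{G})$ really produces a hyperbolic graph, i.e.\ that \emph{uniformly} all the non-hyperbolic features of $X$ --- its quasi-flats and its fat geodesic bigons --- are collapsed, not merely each individual grid. I expect this to require controlling geodesics of the coned-off graph via the product structure of carriers of hyperplanes, combined with a divergence-type estimate (equivalently, checking Bowditch's fineness condition for the coned-off graph). By comparison, the separation estimate in (ii), and the bookkeeping matching the operations $\mathrm{cp}$, ``union over vast-intersection components'' and ``adjoin leftover singletons'' to facts (1)--(3), should be comparatively routine once the graph-product normal forms are in place.
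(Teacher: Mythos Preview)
The paper does not actually contain a proof of this theorem: it is stated in Section~\ref{section:QmEx} with a bare citation to \cite[Theorem~8.35]{Qm}, and then listed again in Section~\ref{section:topicalI} among the applications of the general machinery. So there is no ``paper's own proof'' to compare against; the most the paper indicates is that the result is obtained by applying the general relative-hyperbolicity criterion \cite[Theorem~5.17]{Qm} for groups acting topically-transitively on quasi-median graphs, after which the specific combinatorics of $\mathfrak{J}(\Gamma)$ must be matched to the hypotheses of that criterion.

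Your proposal is consistent with this framework: you act on $X(\Gamma,\mathcal{G})$, take the peripheral family to be the $\Gamma\mathcal{G}$-translates of the gated subgraphs $X(\Lambda,\mathcal{G})$ for $\Lambda\in\mathfrak{J}(\Gamma)$, and verify a two-part criterion (cone-off is hyperbolic; family is geometrically separated). That is exactly the shape of argument one expects from \cite[Theorem~5.17]{Qm}, and your reading of the three operations defining $\mathfrak{J}(\Gamma)$ --- large joins for~(i), the $\mathrm{cp}$ operator and the vast-intersection merging for~(ii) --- is the right dictionary. The ``only if'' direction via standard facts about peripheral subgroups (wide subgroups are peripheral, almost-malnormality forces merging) is also the natural route. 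Since the present paper gives no further detail, I cannot point to any divergence between your outline and the intended argument; to check the technical core you identify (uniform hyperbolicity of the cone-off), you would have to consult \cite{Qm} directly.
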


\paragraph{Wreath products.} Recall that, given two groups $G$ and $H$, the \emph{wreath product} $G \wr H$ is the semidirect product $\left( \bigoplus\limits_{h \in H} G \right) \rtimes H$ where $H$ acts on the direct sum by permuting the coordinates. Often, such groups are described as \emph{lamplighter groups} in the following way. Fix two generating sets $R$ and $S$ of $G$ and $H$ respectively. Notice that $R \cup S$, when we identify $G$ is its copy in the direct sum labelled by the identity element, generates $G \wr H$. An element $\left( (g_h)_{h \in H}, k \right)$ of the wreath product is thought of as the following configuration: each vertex $h \in H$ of the Cayley graph of $H$ (corresponding to $S$) is a lamp whose color is $g_h \in G$, and an arrow points to the vertex $k \in H$ to indicate the position of the lamplighter. Now, right-multiplicating this element by some $r \in R$ corresponds to modifying the color of the lamp at the vertex $k \in H$, ie., where the lamplighter is, from $g_k$ to $g_kr$; and right-multiplicating the element by some $s \in S$ corresponds to moving the lamplighter from $k$ to the adjacent vertex $ks$. Thus, the Cayley graph of $G \wr H$ (corresponding the generating set $R \cup S$) encodes a lamplighter moving along the Cayley graph of $H$ and modifying the colors of the lamps along its path. 

\medskip \noindent
Now, suppose that the group $H$ acts on a CAT(0) cube complex $X$, and suppose without loss of generality that $X$ contains a vertex $x_0 \in X$ with trivial stabiliser. In the previous description, we will replace the Cayley graph of $H$ (ie., the graph on which the lamplighter moves) with the cube complex $X$; the lamps will be the vertices of the orbit $\Omega = H \cdot x_0 \subset X$; and the lamplighter (ie., the arrow) will be replaced with a non empty finite convex subcomplex of $X$. Formally, the situation is the following:

\begin{definition}
A \emph{wreath} $(C, \varphi)$ is the data of a non empty finite convex subcomplex $C \subset X$ and a function $\varphi : \Omega \to G$ with finite support, ie., $\varphi(p)=1$ for all but finitely many $p \in \Omega$. The \emph{graph of wreaths} $\mathfrak{W}$ is the graph whose vertices are the wreaths and whose edges link two wreaths $(C, \varphi)$ and $(Q, \psi)$ either if $\varphi= \psi$ and if there exists a unique hyperplane intersecting exactly one of $C$ and $Q$; or if $C=Q$ and if $\varphi$ and $\psi$ differ on a single point of $C \cap \Omega =Q \cap \Omega$. 
\end{definition} 

\noindent
A funny interpretation is the following. By replacing the arrow by a subcomplex, the lamplighter becomes ``quantum''. It has no precise position, it is everywhere inside the subcomplex, which can be thought of as a ``cloud'', and it can modify the color of any lamp inside this cloud (but just one at each time). The cloud moves by adding or removing a hyperplane from the corresponding subcomplex, the operation being allowed only if the resulting subcomplex remains convex and non empty. 

\begin{thm}\emph{\cite[Proposition 9.10]{Qm}}
The graph of wreaths $\mathfrak{W}$ is quasi-median. 
\end{thm}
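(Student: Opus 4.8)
The plan is to verify directly the three defining conditions of a quasi-median graph for $\mathfrak{W}$: connectedness, weak modularity (triangle and quadrangle conditions), and the absence of induced $K_4^-$ and $K_{3,2}$. Throughout I would think of a wreath $(C,\varphi)$ as having two ``coordinates'': the convex subcomplex $C$, which moves inside the CAT(0) cube complex $X$ by hyperplane additions/removals, and the finitely-supported colouring $\varphi : \Omega \to G$. The first observation to record is that the two types of edges never interact in a single move — an edge either changes $C$ (keeping $\varphi$ fixed) or changes $\varphi$ at one point of $C \cap \Omega$ (keeping $C$ fixed) — so $\mathfrak{W}$ looks locally like a ``product'' of a complex governing the motion of $C$ and a complex governing the colourings, and the strategy is to reduce statements about $\mathfrak{W}$ to known facts about each factor.

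First I would establish connectedness: given two wreaths, I can first move $C$ to $Q$ through a sequence of convex subcomplexes (the finite convex subcomplexes of $X$ form a connected graph under adding/removing one hyperplane at a time — this is a standard fact about CAT(0) cube complexes, since one can shrink any finite convex subcomplex down to a single vertex and build any other one up from a vertex), carrying $\varphi$ along unchanged, being careful that the support of $\varphi$ stays inside the current subcomplex — here one should first enlarge $C$ to contain $\mathrm{supp}(\varphi) \cup \mathrm{supp}(\psi)$, then do all recolourings, then move to $Q$. Next I would analyse the local structure: I claim the cliques of $\mathfrak{W}$ are of two kinds — those coming from a fixed $\varphi$ and a clique in the ``complex of finite convex subcomplexes'' (these are edges or, more precisely, the cliques here correspond to sets of subcomplexes differing by hyperplanes in some fixed family, hence are themselves governed by the cube-complex structure), and those coming from a fixed $C$ and recolourings of a single lamp $p \in C \cap \Omega$, which form a clique of size $|G|$ (all colours in $G$ at that one point). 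Identifying triangles and squares precisely is the technical heart, and it is exactly the place I expect the main difficulty.

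For weak modularity, I would take the metric on $\mathfrak{W}$ and show that the distance between $(C,\varphi)$ and $(Q,\psi)$ splits as a sum of a ``$C$-to-$Q$'' contribution (a distance in the cube complex governing finite convex subcomplexes, measured by the number of hyperplanes in the symmetric difference) and a ``colouring'' contribution ($\sum_{p} d_G(\varphi(p),\psi(p))$ over the finitely many $p$ where they differ, where $d_G$ is the word metric with $|G|\setminus\{1\}$, i.e. $d_G(g,h) = 1$ if $g \neq h$) — plus possibly a correction term accounting for the fact that one must move $C$ to cover the support of $\psi$ before recolouring. Once the distance formula is pinned down, the triangle and quadrangle conditions should follow by casework on which coordinate the relevant neighbours differ in, invoking the corresponding median-like property in each factor (the cube complex of convex subcomplexes, and the ``complete graph'' factor from recolourings, which trivially satisfies these conditions). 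Similarly, an induced $K_4^-$ or $K_{3,2}$ in $\mathfrak{W}$ would project to a forbidden configuration in one of the two factors — or to a genuinely ``mixed'' configuration, which one rules out using that a $C$-move and a $\varphi$-move always commute and generate a full square, never a triangle or a $K_{3,2}$. The main obstacle, as indicated, is getting the distance formula exactly right: the subtlety is the interaction between the support condition (the colouring is only defined/changeable on $C \cap \Omega$) and the motion of $C$, which can force $C$ to detour through larger subcomplexes; handling this bookkeeping carefully — probably by first reducing to the case where both $C$ and $Q$ contain all relevant support — is what makes the argument work, and it is where I would spend the bulk of the effort, likely leaning on the gatedness and product structure of carriers from the earlier structure theorem.
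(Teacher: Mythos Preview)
The paper does not contain a proof of this theorem: it is a survey, and the statement is simply quoted from \cite[Proposition~9.10]{Qm} with no argument given here. So there is no ``paper's own proof'' to compare your proposal against; any assessment has to be on the proposal's internal merits and its plausibility relative to what one expects the cited proof to do.

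On that score, your outline is broadly reasonable --- separating the two kinds of edges, analysing the subcomplex coordinate via the (median) graph of finite convex subcomplexes of $X$, and recognising that the genuine ``quasi'' content comes from the colouring cliques $K_{|G|}$ --- but it contains a concrete misconception that would derail the bookkeeping you flag as the hard part. You write that when moving $C$ one must be ``careful that the support of $\varphi$ stays inside the current subcomplex''. There is no such constraint: a wreath $(C,\varphi)$ is valid for \emph{any} finitely supported $\varphi$, whether or not $\mathrm{supp}(\varphi)\subset C$. The only place $C$ enters is in the recolouring edges: to alter $\varphi$ at $p$ one needs $p\in C\cap\Omega$. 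So the ``detour'' cost in the distance formula comes solely from having to steer $C$ through each point where $\varphi$ and $\psi$ differ, not from carrying the whole support around. Getting this right changes both the connectedness argument (which becomes easier) and the shape of the distance formula (which is genuinely subtle --- it is not a clean sum of a subcomplex term and a colouring term, because the geodesic interleaves moves of both types, and this interaction is exactly what has to be controlled in the triangle and quadrangle conditions). Beyond this, your description of cliques on the subcomplex side is too vague to be checked; in fact that factor is median (no triangles), so all triangles in $\mathfrak{W}$ live in colouring cliques, and this observation simplifies the $K_4^-$/$K_{3,2}$ analysis considerably.
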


\noindent
(We used the same idea in \cite{aTmW} to construct actions of wreath products on median spaces, reproving that acting properly on a CAT(0) cube complex and being a-T-menable are stable under wreath products.)

\paragraph{Diagram products.} In \cite{MR1725439}, Guba and Sapir introduced \emph{diagram products} from the class of \emph{diagram groups} \cite{MR1396957}, in a similar way that graph products can be derived from right-angled Artin groups. 

\begin{definition}
Let $\mathcal{P}= \langle \Sigma \mid \mathcal{R} \rangle$ be a semigroup presentation. We suppose that, for every relation $u=v \in \mathcal{R}$, the relation $v=u$ does not belong to $\mathcal{R}$; as a consequence, $\mathcal{R}$ does not contain relations of the form $u=u$. Let $S(\mathcal{P})$ denote the square complex 
\begin{itemize}
	\item whose vertices are the words written over the alphabet $\Sigma$;
	\item whose edges have the form $(a,u \to v,b)$ for some relation $u=v \in \mathcal{R}$, linking the vertices $aub$ and $avb$;
	\item whose squares have the form $(a,u \to v,b,p \to q,c)$ for some relations $u=v,p=q \in \mathcal{R}$, linking the vertices $aubpc$, $avbpc$, $aubqc$ and $avbqc$.
\end{itemize}
Now, fix a collection of groups $\mathcal{G}$ indexed by the alphabet $\Sigma$, and let $S(\mathcal{P}, \mathcal{G})$ be the complex of groups (we refer to \cite[Chapter II.12]{MR1744486} for more information on (simple) complexes of groups) such that
\begin{itemize}
	\item the underlying complex is $S(\mathcal{P})$;
	\item the vertex-group associated to the word $s=s_1 \cdots s_n$ is $G_s= G_{s_1} \times \cdots \times G_{s_n}$;
	\item the edge-group associated to the edge $(a,u \to v,b)$ is $G_a \times G_b$;
	\item the square-groups are all trivial;
	\item the embedding of an edge-group into a vertex-group coincides the canonical embedding of a factor into a direct product.
\end{itemize}
Fixing some baseword $w \in \Sigma^+$, the \emph{diagram product} $D(\mathcal{P}, \mathcal{G},w)$ is the fundamental group of the complex of groups $S(\mathcal{P}, \mathcal{G},w)$ corresponding to the connected component of $S(\mathcal{P}, \mathcal{G})$ containing the vertex $w$. 
\end{definition}

\noindent
When all the groups of $\mathcal{G}$ are trivial, one recovers the definition of diagram groups as fundamental groups of Squier complexes \cite{MR1396957}.

\medskip \noindent
Thanks to the alternative description of diagram groups given in \cite[Section 10]{Qm}, we generalised the definition in \cite{MoiV} by looking at \emph{braided diagram groups} \cite{MR1396957, FarleyPicture}. It would be too long to define this generalisation here, but the point is that there exist a \emph{braided diagram product} $D_b(\mathcal{P}, \mathcal{G},w)$ and an \emph{annular diagram product} $D_a(\mathcal{P}, \mathcal{G}, w)$, such that
$$D(\mathcal{P},\mathcal{G}, w) \subset D_a(\mathcal{P},\mathcal{G}, w) \subset D_b(\mathcal{P}, \mathcal{G}, w).$$
To avoid ambiguity, the diagram product $D(\mathcal{P}, \mathcal{G},w)$ will be sometimes referred to as the \emph{planar diagram product}. Each of these products acts on quasi-median graphs, denoted by $X(\mathcal{P}, \mathcal{G},w)$, $X_a(\mathcal{P}, \mathcal{G},w)$ and $X_b(\mathcal{P}, \mathcal{G},w)$ respectively \cite{Qm, MoiV}. For instance, the graph $X(\mathcal{P}, \mathcal{G},w)$, roughly speaking, coincides with a connected component of the natural Cayley graph of the fundamental groupoid corresponding to the complex of groups $S(\mathcal{P}, \mathcal{G},w)$.

\section{Rotative actions on quasi-median graphs}\label{section:rotative}

\noindent
The main difference between CAT(0) cube complexes and quasi-median graphs is that hyperplanes may separate the graph into more than two pieces. In fact, in the cases which interest us, the number of sectors delimited by a given hyperplane turns out to be infinite. Therefore, an isometry of infinite order stabilising a fixed hyperplane may only ``rotate'' this hyperplane without ``translating'' vertices. More formally:

\begin{definition}
Let $G$ be a group acting on a quasi-median graph $X$ and $J$ a hyperplane of $X$. The \emph{rotative stabiliser} of $J$ is
$$\mathrm{stab}_{\circlearrowleft}(J)= \bigcap \{ \mathrm{stab}(C) \mid \text{$C$ clique of $J$} \}.$$
\end{definition}

\noindent
An interesting remark is that the rotative stabilisers of two transverse hyperplanes commute, ie., any element of one stabiliser commutes with any element of the other. Loosely speaking, the situation is similar to two rotations in the space whose axes are orthogonal. On the other hand, it is not difficult to show that the group of isometries generated by the rotative stabilisers of two non transverse hyperplanes decomposes as the free product of these two rotative stabilisers (provided that vertex-stabilisers are trivial). Essentially, it is sufficient to play ping-pong with the sectors delimited by the hyperplanes. Generalising the argument to an arbitrary collection of hyperplanes leads to the following statement:

\begin{thm}\label{thm:embedGP}\emph{\cite[Theorem 8.43]{Qm}}
Let $G$ be a group acting on a quasi-median graph $X$ with trivial vertex-stabilisers. For every hyperplane $J$, choose a residually finite subgroup $H_J$ of its rotative stabiliser. If $\Gamma$ is an induced finite subgraph of the crossing graph of $X$, there exists a collection $\mathcal{G}$ of finite-index subgroups of our $H_J$'s such that the graph product $\Gamma \mathcal{G}$ embeds into $G$. 
\end{thm}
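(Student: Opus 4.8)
The plan is to build the embedding of $\Gamma \mathcal{G}$ into $G$ by a ping-pong argument played on the sectors of the hyperplanes indexed by the vertices of $\Gamma$. First I would recall the setup: the crossing graph (or "contact"/"transversality" graph) of $X$ has hyperplanes as vertices, with an edge between two hyperplanes precisely when they are transverse. So an induced finite subgraph $\Gamma$ gives us a finite set of hyperplanes $J_1, \dots, J_n$, with $J_i$ and $J_j$ transverse if and only if $v_i v_j \in E(\Gamma)$. For each $i$ we are handed a residually finite subgroup $H_{J_i}$ of the rotative stabiliser $\mathrm{stab}_{\circlearrowleft}(J_i)$; the subgroups $\mathcal{G}$ we want are to be finite-index subgroups of these. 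The two structural facts I would use are: (a) if $J_i$ and $J_j$ are transverse then $\mathrm{stab}_{\circlearrowleft}(J_i)$ and $\mathrm{stab}_{\circlearrowleft}(J_j)$ commute elementwise (stated in the excerpt); and (b) if $J_i, J_j$ are not transverse then one should be able to separate the action of their rotative stabilisers using the sectors — this is exactly the "ping-pong on sectors" alluded to in the remark preceding Theorem~\ref{thm:embedGP}.

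The key geometric point for ping-pong is the following. Fix a base vertex $x_0$. For a hyperplane $J$ and a clique $C$ dual to $J$, the rotative stabiliser permutes the sectors of $J$ (since it stabilises every clique, it acts on the set of fibers/sectors), and because vertex-stabilisers are trivial, a nontrivial element of $\mathrm{stab}_{\circlearrowleft}(J)$ cannot fix the sector containing $x_0$ while fixing $x_0$ — more carefully, I would show that for $g \in \mathrm{stab}_{\circlearrowleft}(J) \setminus \{1\}$, the vertex $gx_0$ lies in a sector of $J$ different from the one containing $x_0$ (using that $g$ fixes a clique $C$ of $J$ pointwise, the gate of $x_0$ on $N(J)$ projects to a vertex whose $C$-coordinate is moved by $g$, by triviality of vertex stabilisers). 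This gives, for each $J_i$, a partition of the complement of a chosen carrier region into "sides", and elements of $H_{J_i} \setminus \{1\}$ push $x_0$ off into those sides. When $J_i$ and $J_j$ are not transverse, the carriers are disjoint and on opposite sides of a separating hyperplane, so the sides for $J_i$ and the sides for $J_j$ can be arranged to be disjoint from each other and from a neighbourhood of $x_0$ — the standard table-tennis configuration. This yields the free product $H_{J_i} * H_{J_j}$ on non-adjacent pairs, and combined with the commutation on adjacent pairs, a normal form / van der Waerden-type argument produces a homomorphism $\Gamma \mathcal{H} \to G$, where $\mathcal{H} = \{H_{J_i}\}$, which is injective once the ping-pong sets are pairwise in "general position".

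The reason finite-index subgroups are needed — and this is where residual finiteness enters — is that the naive ping-pong sets for the various $H_{J_i}$ need not be disjoint: a single nontrivial element of $H_{J_i}$ might fix $x_0$'s sector of $J_i$ after all, or the "sides" of different hyperplanes may overlap near $x_0$. Residual finiteness lets us pass to a finite-index subgroup $G_i \leq H_{J_i}$ avoiding a prescribed finite set of "bad" elements (those whose action on the relevant cliques or whose displacement of $x_0$ does not separate correctly), so that every nontrivial element of $G_i$ genuinely moves $x_0$ into the prescribed side. One runs this for all $i$ simultaneously (finitely many constraints since $\Gamma$ is finite), obtaining $\mathcal{G} = \{G_i\}$, and then the ping-pong lemma for graph products (which I would either cite from the graph-products literature or assemble from the free-product and direct-product cases using an induction on $|V(\Gamma)|$, splitting off a vertex and its link) gives the embedding $\Gamma \mathcal{G} \hookrightarrow G$.

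The main obstacle I expect is the ping-pong lemma for graph products in this mixed "some generators commute, some play free ping-pong" regime: one has to set up the dynamical sets so that, for a reduced word in $\Gamma \mathcal{G}$, the commuting relations can be used to collect syllables from the same transverse clique together, and then the non-transverse (free) directions genuinely separate orbit points. Concretely, the delicate step is choosing, for each hyperplane $J_i$, a "forbidden region" and "target sides" that are simultaneously respected by all the commutation moves — i.e. showing that applying an element of $G_j$ with $v_iv_j \in E(\Gamma)$ does not disturb the side-structure used for $J_i$ — which should follow from transversality (the hyperplanes cross, so the sector decomposition of $J_i$ is preserved by $\mathrm{stab}_{\circlearrowleft}(J_j)$) but requires care. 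Once that compatibility is established, the verification that the resulting map is injective is the standard reduced-word argument, and the finite-index passage handles the remaining coincidences.
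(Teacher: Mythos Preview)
Your approach matches the one the paper sketches: commutation of rotative stabilisers for transverse hyperplanes, ping-pong on sectors for non-transverse ones, and passage to finite-index subgroups via residual finiteness to make the ping-pong sets compatible, assembled through a graph-product ping-pong/normal-form argument. One sharpening: under the trivial vertex-stabiliser hypothesis a nontrivial element of $\mathrm{stab}_{\circlearrowleft}(J_i)$ can never fix the sector of $x_0$ (the action on sectors is free), so the genuine reason for residual finiteness is solely to arrange, for each non-adjacent pair $i,j$, that no nontrivial element of the chosen subgroup $G_i$ sends the sector of $J_i$ containing $x_0$ to the sector of $J_i$ containing $J_j$---finitely many elements to avoid, hence a finite-index subgroup suffices.
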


\noindent
In a quasi-median graph $X$, the \emph{crossing graph} of $X$ is the graph, denoted by $\Delta X$, whose vertices are the hyperplanes of $X$ and whose edges link two transverse hyperplanes. An interesting particular case of the previous statement is when the $H_J$'s are all infinite cyclic.

\begin{cor}
Let $G$ be a group acting on a quasi-median graph $X$ with trivial vertex-stabilisers and with infinite non torsion rotative stabilisers. If $\Gamma$ is a finite induced subgraph in the crossing graph of $X$, then the right-angled Artin group $A(\Gamma)$ embeds into $G$.
\end{cor}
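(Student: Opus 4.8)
The plan is to derive the corollary directly from Theorem \ref{thm:embedGP} by choosing the subgroups $H_J$ appropriately and then eliminating the passage to finite-index subgroups. Fix a finite induced subgraph $\Gamma$ of the crossing graph $\Delta X$. For each hyperplane $J$ that is a vertex of $\Gamma$, the rotative stabiliser $\mathrm{stab}_{\circlearrowleft}(J)$ is, by hypothesis, infinite and not torsion, so it contains an element of infinite order; let $H_J$ be the infinite cyclic subgroup generated by such an element. An infinite cyclic group is residually finite, so Theorem \ref{thm:embedGP} applies: there is a collection $\mathcal{G}$ of finite-index subgroups of these $H_J$'s such that $\Gamma \mathcal{G}$ embeds into $G$. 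But a finite-index subgroup of $\mathbb{Z}$ is again infinite cyclic, so each group in $\mathcal{G}$ is isomorphic to $\mathbb{Z}$, and hence $\Gamma \mathcal{G}$ is, by definition, the right-angled Artin group $A(\Gamma)$. Composing this isomorphism with the embedding $\Gamma \mathcal{G} \hookrightarrow G$ gives the desired embedding $A(\Gamma) \hookrightarrow G$.

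The key steps, in order, are: (i) use the hypothesis on the rotative stabilisers to extract, for each vertex-hyperplane $J$ of $\Gamma$, an element of infinite order and set $H_J = \langle h_J \rangle \cong \mathbb{Z}$; (ii) observe that $\mathbb{Z}$ is residually finite so that the hypotheses of Theorem \ref{thm:embedGP} are met with this choice; (iii) apply Theorem \ref{thm:embedGP} to obtain a collection $\mathcal{G}$ of finite-index subgroups and an embedding $\Gamma \mathcal{G} \hookrightarrow G$; (iv) note that every finite-index subgroup of $\mathbb{Z}$ is infinite cyclic, so $\Gamma \mathcal{G} \cong A(\Gamma)$, and conclude.

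The only point requiring any care — and it is the closest thing to an obstacle — is step (iv): one must recall that, by the remark immediately following the definition of graph products in Section \ref{section:QmEx}, a graph product $\Gamma \mathcal{G}$ with all vertex-groups infinite cyclic is precisely the right-angled Artin group $A(\Gamma)$, and that this identification does not depend on which infinite cyclic groups are chosen (any two are isomorphic, and graph products respect isomorphisms of the vertex-groups). Since a nontrivial finite-index subgroup of $\mathbb{Z}$ is again $\cong \mathbb{Z}$, this identification is unaffected by the shrinking produced by Theorem \ref{thm:embedGP}. No quantitative control on the indices is needed, so nothing else in the argument is delicate.
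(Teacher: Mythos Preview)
Your argument is correct and is exactly the approach the paper has in mind: the corollary is stated immediately after Theorem~\ref{thm:embedGP} with the remark that it is the ``interesting particular case\dots\ when the $H_J$'s are all infinite cyclic'', and your proof simply spells this out. There is nothing to add.
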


\noindent
In general, describing the crossing graph of a quasi-median graph is difficult, but for graph products, one gets the following nice description. Let $\Gamma$ be a simplicial graph and $\mathcal{G}$ a collection of non trivial groups indexed by $V(\Gamma)$. The crossing graph of the quasi-median graph $X(\Gamma , \mathcal{G})$ coincides with the graph 
\begin{itemize}
	\item whose vertices are the conjugates of vertex-groups $gG_ug^{-1}$;
	\item whose edges link two conjugates $gG_ug^{-1}$ and $hG_vh^{-1}$ if they commute, ie., any element of one conjugate commutes with any element of the other.
\end{itemize}
Therefore, the crossing graph $\Delta X(\Gamma, \mathcal{G})$ naturally generalises the \emph{extension graph} defined for right-angled Artin groups in \cite{embeddingRAAG}, and Theorem \ref{thm:embedGP} generalises \cite[Theorem~2]{embeddingRAAG}. 

\medskip \noindent
Pushing further the argument used to prove Theorem \ref{thm:embedGP}, it is possible to find a structure theorem for groups acting on quasi-median graphs with ``sufficiently large'' rotative stabilisers.

\begin{definition}
Let $G$ be a group acting on a quasi-median graph $X$ and $\mathcal{J}$ a collection of hyperplanes. The action $G \curvearrowright X$ is \emph{$\mathcal{J}$-rotative} if, for every $J \in \mathcal{J}$, the rotative stabiliser $\mathrm{stab}_{\circlearrowleft}(J)$ acts transitively and freely on the set of sectors delimited by $J$. 
\end{definition}

\noindent
Before stating our theorem, we also need the following definition: given a quasi-median graph $X$, a collection of hyperplanes $\mathcal{J}$ and a base vertex $x_0 \in X$, one says that a subcollection $\mathcal{J}_0 \subset \mathcal{J}$ is \emph{$x_0$-peripheral} if, for every $J \in \mathcal{J}_0$, there does not exist a hyperplane of $\mathcal{J}$ separating $J$ and $x_0$. 

\begin{thm}\label{thm:rotativeactions}\emph{\cite[Theorem 10.54]{Qm}}
Let $G$ be a group acting $\mathcal{J}$-rotatively on a quasi-median graph $X$. Fix a basepoint $x_0 \in X$. If $Y \subset X$ denotes the intersection of the sectors containing $x_0$ which are delimited by a hyperplane of $\mathcal{J}$, then
$$G= \mathrm{Rot}(\mathcal{J}) \rtimes \mathrm{stab}(Y), \ \text{where} \ \mathrm{Rot}(\mathcal{J})= \langle  \mathrm{stab}_{\circlearrowleft}(J), \ J \in \mathcal{J} \rangle.$$
Moreover, if $\mathcal{J}_0 \subset \mathcal{J}$ denotes the unique maximal $x_0$-peripheral subcollection of $\mathcal{J}$, then $\mathrm{Rot}(\mathcal{J})$ decomposes as a graph product $\Delta \mathcal{G}$, where $\Delta$ is the graph whose vertices are the hyperplanes of $\mathcal{J}_0$ and whose edges link two hyperplanes which are transverse, and where $\mathcal{G}= \{ \mathrm{stab}_{\circlearrowleft}(J) \mid J \in \mathcal{J}_0 \}$.
\end{thm}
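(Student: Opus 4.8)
The plan is to build the semidirect-product decomposition by constructing an explicit retraction $X \to Y$ and analysing the way $\mathrm{Rot}(\mathcal{J})$ moves the basepoint. First I would record the following structural fact about $\mathcal{J}$-rotative actions: since $\mathrm{stab}_{\circlearrowleft}(J)$ acts transitively and freely on the sectors delimited by $J$, and rotative stabilisers of transverse hyperplanes commute (as noted in the text), the subgroup $\mathrm{Rot}(\mathcal{J})$ acts on $X$ in a way that is ``locally free'' on the combinatorics of the hyperplanes in $\mathcal{J}$. The key geometric object is $Y$, the intersection of all the $x_0$-sectors of hyperplanes in $\mathcal{J}$; I would first check $Y$ is a nonempty gated subgraph (an intersection of gated sectors), contains $x_0$, and that no hyperplane of $\mathcal{J}$ crosses $Y$ — indeed $Y$ lies entirely on the $x_0$-side of each $J \in \mathcal{J}$. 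Then I would argue that every $G$-translate of $Y$ is of the form $gY$ with $g \in \mathrm{Rot}(\mathcal{J})$ times something stabilising $Y$, by ``straightening'': given $g \in G$, use the rotative stabilisers to rotate each hyperplane of $\mathcal{J}$ separating $gx_0$ from $Y$ back, one at a time, using freeness to see the correcting element is unique. This simultaneously shows $\mathrm{Rot}(\mathcal{J})$ is normal (it is the kernel of the action on the set of $Y$-translates, essentially), that $\mathrm{Rot}(\mathcal{J}) \cap \mathrm{stab}(Y) = 1$ (freeness again, since a nontrivial product of rotations genuinely moves some sector hence cannot fix $Y$), and that $G = \mathrm{Rot}(\mathcal{J}) \cdot \mathrm{stab}(Y)$ — giving the semidirect product.

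For the normality and the product decomposition $G = \mathrm{Rot}(\mathcal{J}) \rtimes \mathrm{stab}(Y)$, the cleanest route is probably to define a map $\rho : G \to \mathrm{Rot}(\mathcal{J})$ by $\rho(g) = $ the unique element of $\mathrm{Rot}(\mathcal{J})$ with $\rho(g)^{-1} g \in \mathrm{stab}(Y)$, and verify it is a well-defined crossed homomorphism (or, after identifying $\mathrm{Rot}(\mathcal{J})$ as normal, an actual retraction onto a complement). Well-definedness requires exactly the straightening argument of the previous paragraph: one has to see that the ``hyperplanes of $\mathcal{J}$ separating $gx_0$ from $x_0$'' can be peeled off in any order with a uniquely determined rotative correction, and that after peeling them all off the residual isometry stabilises $Y$. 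Here one uses that a geodesic crosses each hyperplane at most once (from the structure theorem quoted above), so the relevant set of separating hyperplanes is finite and the induction is on its cardinality; transversality/commutativity of rotative stabilisers handles order-independence.

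For the second part — that $\mathrm{Rot}(\mathcal{J})$ is the graph product $\Delta\mathcal{G}$ over the $x_0$-peripheral subcollection $\mathcal{J}_0$ — I would first show that $\mathrm{Rot}(\mathcal{J}) = \mathrm{Rot}(\mathcal{J}_0)$: any $J \in \mathcal{J}\setminus\mathcal{J}_0$ is separated from $x_0$ by some $J' \in \mathcal{J}$, and rotating about peripheral hyperplanes one can push $J$ to a translate that is peripheral, so $\mathrm{stab}_{\circlearrowleft}(J)$ is conjugate into $\mathrm{Rot}(\mathcal{J}_0)$ by an element of $\mathrm{Rot}(\mathcal{J}_0)$ (using that conjugating a rotative stabiliser by a group element gives the rotative stabiliser of the translated hyperplane). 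Then, to identify $\mathrm{Rot}(\mathcal{J}_0)$ with $\Delta\mathcal{G}$, I would exhibit the universal property: there is an obvious surjection $\Delta\mathcal{G} \twoheadrightarrow \mathrm{Rot}(\mathcal{J}_0)$ since the defining relations of a graph product (the vertex groups $\mathrm{stab}_{\circlearrowleft}(J)$, commutation across edges = transverse pairs) all hold in $G$; injectivity is the ping-pong argument alluded to in the text before Theorem \ref{thm:embedGP}, generalised to the whole peripheral family — use the normal-form theorem for graph products and play ping-pong with the sectors, where a reduced word of syllable length $\geq 1$ is shown to move $x_0$ out of $Y$ by tracking which sector each syllable's rotation lands in, peripherality ensuring the syllables act on ``independent'' sector systems except for the transverse commutations already quotiented out.

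The main obstacle I expect is the straightening/well-definedness step underlying the retraction $\rho$: one must show that for arbitrary $g \in G$ the correcting rotative element exists, is unique, and that the order in which one peels off the separating hyperplanes of $\mathcal{J}$ does not matter — this is where freeness of the rotative action, the ``geodesics cross hyperplanes once'' structure theorem, and the commutation of transverse rotative stabilisers all have to be combined carefully, and it is the technical heart on which both the semidirect decomposition and (via the ping-pong normal form) the graph product structure rest. A secondary subtlety is checking that $Y$ is genuinely gated and that its stabiliser is exactly the desired complement rather than something larger, i.e. that nothing in $\mathrm{stab}(Y)$ secretly acts as a nontrivial rotation.
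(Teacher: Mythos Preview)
The paper does not actually prove this theorem: it is quoted from \cite[Theorem~10.54]{Qm}, and the only indication of method is the sentence preceding the definition of $\mathcal{J}$-rotative actions, namely that one obtains it by ``pushing further'' the ping-pong-with-sectors argument sketched for Theorem~\ref{thm:embedGP}. So there is no detailed proof here to compare your proposal against; what can be said is that your overall strategy---a straightening/peeling argument for the factorisation $G=\mathrm{Rot}(\mathcal{J})\cdot\mathrm{stab}(Y)$, followed by a sector ping-pong for the graph-product structure of $\mathrm{Rot}(\mathcal{J})$---is entirely in line with that hint and is the natural approach.

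Two genuine soft spots in your outline, however. First, your parenthetical justification for normality of $\mathrm{Rot}(\mathcal{J})$ (``kernel of the action on the set of $Y$-translates'') cannot be right as stated: $\mathrm{Rot}(\mathcal{J})$ acts \emph{transitively} on those translates, so it is not a kernel. The honest argument is that conjugation by $g\in G$ sends $\mathrm{stab}_{\circlearrowleft}(J)$ to $\mathrm{stab}_{\circlearrowleft}(gJ)$, and for this to land back in $\mathrm{Rot}(\mathcal{J})$ you need $\mathcal{J}$ to be $G$-invariant---an assumption you should make explicit (it is implicit in all the applications but not written into the statement as reproduced here). Second, your claim that $\mathrm{Rot}(\mathcal{J})\cap\mathrm{stab}(Y)=\{1\}$ because ``a nontrivial product of rotations genuinely moves some sector'' is precisely the content of the ping-pong argument you defer to the second half; as written, the first half assumes what the second half proves. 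The clean ordering is: establish $G=\mathrm{Rot}(\mathcal{J})\cdot\mathrm{stab}(Y)$ and normality first, then run the ping-pong once to get simultaneously the trivial intersection and the identification $\mathrm{Rot}(\mathcal{J})\cong\Delta\mathcal{G}$ (a nontrivial reduced word in the graph product moves $x_0$ out of $Y$, hence is nontrivial in $G$ and does not lie in $\mathrm{stab}(Y)$).
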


\noindent
As an application, we showed in \cite[Theorem 10.58]{Qm} that (planar) diagram products decompose as semidirect products between a graph product and their underlying diagram groups. By taking collections of infinite cyclic groups and trivial diagram groups, it implies that many right-angled Artin groups turn out to be diagram groups as well \cite[Corollary 10.60]{Qm}. In \cite{MoiV}, we generalised this argument and proved the following statement (see also \cite[Theorem 5.7]{MR2193191} for planar diagram groups):

\begin{thm}
Let $D$ be a planar (resp. annular, braided) diagram group. There exist a subgroup $R$ of some right-angled Artin group and a subgroup $S$ of Thompson's group $F$ (resp. $T$, $V$), such that $D$ decomposes as the short exact sequence
$$1 \to R \to D \to S \to 1.$$
\end{thm}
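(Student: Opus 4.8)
The plan is to deduce this statement from Theorem~\ref{thm:rotativeactions} applied to the natural actions of the (braided) diagram products on their associated quasi-median graphs $X(\mathcal{P},\mathcal{G},w)$, $X_a(\mathcal{P},\mathcal{G},w)$, $X_b(\mathcal{P},\mathcal{G},w)$. First I would recall, from the alternative description of diagram groups in \cite[Section~10]{Qm} and \cite{MoiV}, that in each case the diagram product $D$ acts on the relevant quasi-median graph with trivial vertex-stabilisers, and that this action is $\mathcal{J}$-rotative for the collection $\mathcal{J}$ of \emph{all} hyperplanes: the rotative stabiliser of a hyperplane dual to a clique ``labelled'' by a letter $s \in \Sigma$ is a copy of the vertex-group $G_s$ (or of a free group in the braided case when the groups of $\mathcal{G}$ are trivial), and it permutes the sectors freely and transitively by relabelling one lamp. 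Fixing the basepoint $x_0$ to be the vertex corresponding to the baseword $w$, Theorem~\ref{thm:rotativeactions} then gives a semidirect decomposition $D = \mathrm{Rot}(\mathcal{J}) \rtimes \mathrm{stab}(Y)$, with $\mathrm{Rot}(\mathcal{J})$ a graph product $\Delta\mathcal{G}$ over the crossing graph of the $x_0$-peripheral hyperplanes.

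Next I would identify the two pieces. The normal subgroup $\mathrm{Rot}(\mathcal{J}) = \Delta\mathcal{G}$ is a graph product of the vertex-groups $G_s$; to land inside a right-angled Artin group I would use that a graph product of groups each embedding in a RAAG (here each $G_s$, being itself built the same way or being a subgroup of a free group in the braided case, embeds in a RAAG — and in the purely diagram-group setting the $G_s$ are trivial or free, so $\Delta\mathcal{G}$ is literally a RAAG) again embeds in a RAAG, by the standard fact that graph products of RAAGs are RAAGs. So $R := \mathrm{Rot}(\mathcal{J})$ is (a subgroup of) a right-angled Artin group. For the quotient, I would show that $\mathrm{stab}(Y)$ is naturally isomorphic to a subgroup of the appropriate Thompson group: the subgraph $Y$ is the ``skeleton'' of configurations reachable from $w$ by applying relations with no lamps lit, so $\mathrm{stab}(Y)$ is exactly the group of diagrams over $\mathcal{P}$ with trivial decorations, i.e.\ the underlying (planar, annular, braided) diagram group $D(\mathcal{P})$; and $D(\mathcal{P})$, $D_a(\mathcal{P})$, $D_b(\mathcal{P})$ embed respectively in $F$, $T$, $V$ by the universality results of Guba--Sapir \cite{MR1725439} and Farley \cite{FarleyPicture} (every diagram group embeds in $F$, every annular diagram group in $T$, every braided diagram group in $V$). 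Setting $S := \mathrm{stab}(Y)$, the semidirect product $D = R \rtimes S$ furnishes the desired short exact sequence $1 \to R \to D \to S \to 1$.

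The main obstacle is the verification that the action is genuinely $\mathcal{J}$-rotative with the peripheral crossing graph and quotient coming out as claimed, i.e.\ correctly matching the combinatorics of $X(\mathcal{P},\mathcal{G},w)$ against the definitions: one must check that hyperplanes of $X$ really do correspond to ``letter-positions'' in words, that distinct sectors of such a hyperplane correspond bijectively to the elements of $G_s$ (freeness and transitivity of the rotative stabiliser), that $Y$ — the intersection of the $x_0$-containing sectors — is precisely the Squier-type complex carrying the undecorated diagram group, and that in the braided and annular cases the rotative stabilisers are free of infinite rank but still act freely-transitively on sectors. This is essentially bookkeeping carried out in \cite{Qm, MoiV}, so I would cite those computations rather than redo them; the remaining point requiring a genuine argument is the embedding $\mathrm{stab}(Y) \hookrightarrow F$ (resp.\ $T$, $V$), for which I would either invoke the known universality theorems directly or reprove them using the same quasi-median machinery, viewing $F$, $T$, $V$ as the diagram products associated to the Thompson semigroup presentation $\langle x \mid x = x^2\rangle$.
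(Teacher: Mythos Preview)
Your plan conflates diagram \emph{products} with diagram \emph{groups}. The statement concerns a diagram group $D=D(\mathcal{P},w)$, i.e.\ the case where all the vertex-groups $G_s$ are trivial. In that situation the quasi-median graph $X(\mathcal{P},\mathcal{G},w)$ is just the CAT(0) cube complex covering the Squier complex: every clique is an edge, every hyperplane has exactly two sectors, and---since the action has trivial vertex-stabilisers---every rotative stabiliser is trivial. Applying Theorem~\ref{thm:rotativeactions} to \emph{this} action therefore produces the vacuous decomposition $D = 1 \rtimes D$, so your $R$ is trivial and your $S$ is all of $D$. (Your parenthetical that the rotative stabilisers become free groups ``in the braided case when the groups of $\mathcal{G}$ are trivial'' is not correct: with trivial $\mathcal{G}$ the cliques are still edges.)

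The argument then collapses to the bare assertion that every planar diagram group embeds in $F$. This is false: non-abelian free groups are planar diagram groups, while Thompson's group $F$ contains no non-abelian free subgroup (Brin--Squier). So there is no ``universality'' result of the form you invoke, and the short exact sequence cannot in general be taken with $R=1$. The actual proof has to manufacture a \emph{nontrivial} normal subgroup $R$: one does not apply Theorem~\ref{thm:rotativeactions} to the Squier action but to a different rotative action in which the hyperplanes record positions of \emph{cells} in a diagram and the rotative stabilisers record the choice of which relation of $\mathcal{R}$ fills that cell. Those rotative stabilisers are free (so $\mathrm{Rot}(\mathcal{J})$ is a graph product of free groups, hence sits in a right-angled Artin group), and the quotient $\mathrm{stab}(Y)$ only remembers the \emph{shape} of the diagram, which is what lands in $F$ (resp.\ $T$, $V$). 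Identifying the right quasi-median graph, and checking rotativity there, is the missing idea.
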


\noindent
As a consequence, any simple diagram group embeds into the corresponding Thompson's group. Our motivation in \cite{MoiV} was to construct new subgroups of Thompson's group~$V$.

\section{Topical-transitive actions on quasi-median graphs I}\label{section:topicalI}

\noindent
An interesting point is that, in most of the examples of groups acting on quasi-median graphs that we know, combination theorems can be proved with respect to clique-stabilisers. Let us illustrate such arguments with the following statement.

\begin{thm}
Let $\Gamma$ be a finite simplicial graph and $\mathcal{G}$ a collection of groups indexed by the vertices of $\Gamma$. Suppose that every group of $\mathcal{G}$ acts metrically properly on a CAT(0) cube complex. Then so does the graph product $\Gamma \mathcal{G}$. 
\end{thm}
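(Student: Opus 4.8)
The plan is to build a proper action of $\Gamma\mathcal{G}$ on a CAT(0) cube complex by applying the quasi-median machinery to the Cayley graph $X=X(\Gamma,\mathcal{G})$, which is quasi-median by the theorem quoted earlier, and then transferring to a cube complex via the canonical equivariant quasi-isometry $X\to C(X)$. Recall that $\Gamma\mathcal{G}$ acts on $X$ with trivial vertex-stabilisers; in particular every vertex-group $G_u$ stabilises (and acts on) the cliques dual to some hyperplane. The key observation is that the hyperplane-stabilisers — more precisely the stabilisers of sectors and the clique-stabilisers — in this action are controlled by the factor groups $G_u$, which by hypothesis act metrically properly on CAT(0) cube complexes. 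So morally the metric of $X$, measured by counting separating hyperplanes, combines with the metrics on the factors to give a proper ``combined'' pseudometric, and this is exactly the situation a combination theorem should handle.

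First I would recall the structure of hyperplanes in $X(\Gamma,\mathcal{G})$: each hyperplane is of the form $gJ_u$ where $J_u$ is the hyperplane dual to the clique $G_u=G_u\cdot 1$ at the identity, the fibers are copies of $X(\Gamma\setminus\{u\},\mathcal{G})$ (or of the corresponding star/link subgraph), and the carrier splits as (fiber)$\times$(clique) with the clique a copy of $G_u$. Thus a group element is determined by its image in the various fibers together with, for each hyperplane it must ``cross'', an element of the corresponding $G_u$ recording which sector it lands in. Next I would use the fact, recorded in the excerpt, that the distance in $X$ between two vertices equals the number of separating hyperplanes, and the decomposition of the carrier, to write down for each factor group $G_u$ the action on the set of $\Gamma\mathcal{G}$-translates of cliques dual to $J_u$: this action is (up to the $\Gamma\mathcal{G}$-action on the ``index set'' of such cliques) a disjoint union of copies of the left-multiplication action of $G_u$ on itself. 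Feeding in a metrically proper $G_u$-action on a CAT(0) cube complex $Y_u$, one obtains for each clique a copy of $Y_u$ on which the appropriate clique-stabiliser acts properly.

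The heart of the argument is then the combination step: assemble the cube complexes $Y_u$ (one copy per translate of each $J_u$-dual clique) together with the CAT(0) cube complex $C(X)$ associated to $X$ into a single CAT(0) cube complex on which $\Gamma\mathcal{G}$ acts properly. Concretely I would take the wallspace on (an appropriate vertex set built from) $X$ whose walls are the sectors of $X$ together with, for each clique $C$ dual to some $J_u$, the pullbacks of the walls of $Y_u$ under the projection to $C\cong G_u$; then cubulate. Equivariance is automatic from the $\Gamma\mathcal{G}$-equivariance of the constructions. Properness is the payoff computation: if $g\in\Gamma\mathcal{G}$ moves a basepoint a bounded distance in the new complex, then it crosses boundedly many walls, hence (from the $X$-walls) moves a bounded distance in $X$, hence stabilises all but finitely many hyperplanes up to rotation and has bounded ``rotation'' in each $G_u$-coordinate (from the $Y_u$-walls and metric properness of $G_u\curvearrowright Y_u$); combining, $g$ lies in a finite set.

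I expect the main obstacle to be exactly this properness bookkeeping, in particular making precise the claim that the ``coordinates'' of a group element (its displacement in $X$ together with finitely many elements of the $G_u$'s) are simultaneously bounded, and checking that the resulting wallspace is genuinely a wallspace (the walls cross nicely, finite interval condition, etc.) so that Sageev's construction applies. One should be careful that the relevant clique-stabilisers really are conjugates of the $G_u$'s acting by left multiplication and not something larger, which is where triviality of vertex-stabilisers in $\Gamma\mathcal{G}\curvearrowright X$ is used crucially. A cleaner route, which I would mention as an alternative, is to phrase the whole thing via the topical-transitive / combination formalism of Sections \ref{section:topicalI}--\ref{section:topicalII}: the action $\Gamma\mathcal{G}\curvearrowright X$ is topical-transitive with clique-stabilisers the $G_u$'s, so the general combination theorem for metric properness (proved there with respect to clique-stabilisers) applies directly and the only input needed is the hypothesis on the $G_u$'s.
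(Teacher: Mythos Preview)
Your approach is essentially the paper's: pull back walls from the factors' cube complexes via the projection maps onto cliques of $X(\Gamma,\mathcal{G})$, assemble into a $\Gamma\mathcal{G}$-invariant wallspace on $X$, and cubulate. Two differences are worth flagging.

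First, a cosmetic one: the paper does \emph{not} throw in the sector walls of $X$ (the walls of $C(X)$). It uses only the pulled-back walls $\mathcal{HW}=\bigcup_{J}\mathcal{W}(J)$. Adding the sector walls is harmless, but it is not what gives properness and it slightly obscures the argument.

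Second, and this is the actual content you are missing: the obstacle you name (``checking that the resulting wallspace is genuinely a wallspace\ldots finite interval condition'') is precisely the heart of the proof, and the paper resolves it rather than leaving it as bookkeeping. Your phrasing ``one copy per translate of each $J_u$-dual clique'' is the problem: a single hyperplane has infinitely many cliques dual to it, so naively you place infinitely many identical (or worse, slightly different) wall systems along the same hyperplane and lose finiteness. The paper's key step is \emph{coherence}: for cliques $C=gG$ and $C'=ghG$ with $h$ centralising $G$, one has $t_{C\to C'}\mathcal{W}(C)=\mathcal{W}(C')$, because right-multiplication by $h$ realises $t_{C\to C'}$ and $\mathcal{W}(G)$ is $G$-invariant. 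Hence the extended wall collections $\overline{\mathcal{W}}(C)$ and $\overline{\mathcal{W}}(C')$ coincide, and after removing duplicates one gets exactly one wall system $\mathcal{W}(J)$ per \emph{hyperplane}, not per clique.

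Once coherence is established, the paper's properness argument is also sharper than your sketch: rather than bounding ``coordinates'' indirectly, it proves the explicit formula
\[
d_{\mathcal{HW}}(1,g)=\sum_{i=1}^n d_{X(G_i)}\bigl(x_0(G_i),\,g_i\cdot x_0(G_i)\bigr)
\]
for $g=g_1\cdots g_n$ a reduced word, from which both the finite-interval condition and metric properness are immediate. Your final paragraph's ``cleaner route'' via the topical-transitive combination theorem is exactly right; indeed the paper presents this graph-product computation precisely as the motivating example for that general formalism.
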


\begin{proof}[Sketch of proof.]
For every $G \in \mathcal{G}$, fix a CAT(0) cube complex $X(G)$ on which it acts metrically properly; via the orbit map
$$\mathcal{O} : \left\{ \begin{array}{ccc} G & \to & X(G) \\ g & \mapsto & g \cdot x_0(G) \end{array} \right.,$$
where $x_0(G) \in X(G)$ is a fixed vertex, one may naturally endow $G$ with a structure of wallspace, ie., set
$$\mathcal{W}(G)= \left\{ \{ \mathcal{O}^{-1}(D), \mathcal{O}^{-1}(D^c) \}, \ D \ \text{halfspace of $X$} \right\}.$$
Now, fix a clique $C$ of the quasi-median graph $X(\Gamma, \mathcal{G})$ on which $\Gamma \mathcal{G}$ acts. Recall that $X(\Gamma, \mathcal{G})$ is the Cayley graph of $\Gamma \mathcal{G}$ with respect to the generating set $\bigcup\limits_{G \in \mathcal{G}} G \backslash \{ 1 \}$. As a consequence, the cliques of $X(\Gamma, \mathcal{G})$ coincide with cosets of vertex-groups. Therefore, there exist some $g \in \Gamma \mathcal{G}$ and some $G \in \mathcal{G}$ such that $C=gG$. The walls defined on $G$ naturally define walls on the clique $C$ (notice that this transfer does not depend on our choice of $g$, which is used to identify $C$ with $G$, since $g$ is uniquely determined up to right-multiplication by an element of $G$  and that $\mathcal{W}(G)$ is $G$-invariant), and we may extend them as walls on the whole quasi-median graph by setting
$$\overline{\mathcal{W}}(C) = \left\{ \{ \mathrm{proj}_C^{-1}(D), \mathrm{proj}^{-1}_C(D^c) \}, \ \{D,D^C \} \in \mathcal{W}(C) \right\},$$
where $\mathcal{W}(C)$ denotes the collection of walls on $C$ induced by the walls of $G$. Thus, one gets a collection of walls
$$\bigcup\limits_{\text{$C$ clique}} \overline{\mathcal{W}}(C)$$
on $X(\Gamma, \mathcal{G})$, which is $\Gamma \mathcal{G}$-invariant by construction. However, in general infinitely many walls separate two given vertices of our graph. Nevertheless, it turns out that, if one removes duplicated walls (ie., walls inducing the same partition of $X(\Gamma, \mathcal{G})$), then our collection of walls endows $X(\Gamma, \mathcal{G})$ with a structure of wallspace. The key observation is that the system of wallspaces $\{ \mathcal{W}(C) \mid \text{$C$ clique} \}$ is \emph{coherent}, meaning that, for every clique $C$ and $C'$ dual to the same hyperplane, the equality
$$t_{C \to C'} \mathcal{W}(C) = \mathcal{W}(C')$$
holds, where $t_{C \to C'}$ denotes the restriction to $C$ of the projection onto $C'$ (this map may be thought of as a ``parallel transport'' along the hyperplane dual to $C$ and $C'$). Indeed, because $C$ and $C'$ are dual to the same hyperplane of $X(\Gamma, \mathcal{G})$, there exist $g \in \Gamma \mathcal{G}$, $G \in \mathcal{G}$ and $h \in C_{\Gamma \mathcal{G}} (G)$ such that $C=gG$ and $C'=ghG$, so that
$$\mathcal{W}(C')= \mathcal{W}(ghG) = gh \mathcal{W}(G) = g \mathcal{W}(G)h = \mathcal{W}(C)h = t_{C \to C'} \mathcal{W}(C),$$
since the right-multiplication by $h$ turns out to coincide with the map $t_{gG \to ghG}$. As a consequence, the collections of walls $\overline{\mathcal{W}}(C)$ and $\overline{\mathcal{W}}(C')$ coincide if $C$ and $C'$ are two cliques dual to the same hyperplane $J$. So it makes sense to define $\mathcal{W}(J)$ as the collection $\overline{\mathcal{W}}(C)$ for any clique $C$ dual to $J$, and we finally endow $X(\Gamma, \mathcal{G})$ with the collection of walls
$$\mathcal{HW} = \bigcup\limits_{\text{$J$ hyperplane}} \mathcal{W}(J).$$
It can be proved that, if we write an element $g \in \Gamma \mathcal{G}$ as a word of minimal length $g_1 \cdots g_n$, where $g_i$ belongs to some $G_i \in \mathcal{G}$ for every $1 \leq i \leq n$, then
$$d_{\mathcal{HW}}(1,g) = \sum\limits_{i=1}^n d_{\mathcal{W}(G_i)}(1,g_i) = \sum\limits_{i=1}^n d_{X(G_i)} (x_0(G_i),g_i \cdot x_0(G_i)),$$
where $d_{\mathcal{HW}}(\cdot, \cdot)$ denotes the number of walls in $\mathcal{HW}$ separating two vertices of $X(\Gamma, \mathcal{G})$, and, for every $G \in \mathcal{G}$, $d_{\mathcal{W}(G)}( \cdot, \cdot)$ the number of walls of $\mathcal{W}(G)$ separating two points of $G$ and $d_{X_0(G)}$ the distance in the CAT(0) cube complex $X_0(G)$. From this formula, it follows that only finitely many walls of $\mathcal{HW}$ separate two given vertices of $X(\Gamma, \mathcal{G})$, so that $(X(\Gamma, \mathcal{G}), \mathcal{HW})$ defines a wallspace; and that the action of $\Gamma \mathcal{G}$ on the CAT(0) cube complex obtained by cubulating this wallspace is metrically proper. 
\end{proof}

\noindent
In the previous argument, two points are fundamental:
\begin{itemize}
	\item For every clique $C$ of our quasi-median graph, the action $\mathrm{stab}(C) \curvearrowright C$ is free and transitive on the vertices. This allows us to identify a clique with its stabiliser in order to transfer some structure from $\mathrm{stab}(C)$ to $C$ (in the previous proof, a collection of walls).
	\item There exists some compatibility between the action $\Gamma \mathcal{G} \curvearrowright X(\Gamma, \mathcal{G})$ and the maps $t_{C \to C'}$, which implies that the collection of structures (in the previous proof, collections of walls) defined on each clique is \emph{coherent}. 
\end{itemize}
In \cite{Qm}, we introduced \emph{topical-transitive} actions to recover these properties and to generalise the previous argument to other groups acting on quasi-median graphs.

\begin{definition}
Let $G$ be a group acting on a quasi-median graph $X$. The action is \emph{topical} if, for every hyperplane $J$, every clique $C$ dual to $J$ and every element $g \in \mathrm{stab}(J)$, there exists some $\rho_C(g) \in \mathrm{stab}(C)$ such that $g$ and $\rho_C(g)$ induce the same permutation on the set of sectors delimited by $J$. If moreover the action $\mathrm{stab}(C) \curvearrowright C$ is free and transitive on the vertices for every clique $C$ which either is infinite or satisfies $\mathrm{stab}(C) \neq \mathrm{fix}(C)$, the action $G \curvearrowright X$ is \emph{topical-transitive}. 
\end{definition}

\noindent
Fix a group $G$ acting topically-transitively on some quasi-median graph $X$. For convenience, we suppose that the action $\mathrm{stab}(C) \curvearrowright C$ is free and transitive on the vertices for every clique $C$ of $X$ (which happens, for instance, if all the cliques of $X$ are infinite). Fixing a collection of cliques $\mathcal{C}$ such that every orbit of hyperplane intersects $\mathcal{C}$ along exactly one clique ($\mathcal{C}$ can be thought of as a collection of cliques of reference), the key point is that, for every clique $C$ labelled by $Q \in \mathcal{C}$ (ie., some translate of $C$ is dual to the same hyperplane as $Q$), there exist a natural bijection $\phi_C : C \to Q$ and a natural isomorphism $\varphi_C : \mathrm{stab}(C) \to \mathrm{stab}(Q)$, so that
\begin{itemize}
	\item $\phi_{C} = \phi_{C'} \circ t_{C \to C'}$ for every clique $C$ and $C'$ dual to the same hyperplane;
	\item $\phi_{gC}(gx) = \varphi_{C}(s_{C}(g)) \cdot \phi_C(x)$ for every clique $C$, every vertex $x \in C$ and every element $g \in G$.
\end{itemize}
The map $s_C : G \to \mathrm{stab}(C)$ is defined below, but the point to keep in mind is that $s_C(g)=g$ for every $g \in \mathrm{stab}(C)$, so that $\phi_C$ turns out to be a $\varphi_C$-equivariant bijection. 

\medskip \noindent
Before describing the maps $\phi_C$ and $\varphi_C$, let us show how to create \emph{invariant} and \emph{coherent systems of structures}, as in the argument above. We illustrate the construction only for wallspaces, but the same idea can be applied to measured wallspaces, topologies, $\sigma$-algebras, and to collections of maps such as metrics, embeddings into Banach or Hilbert spaces, and so on. 

\begin{prop}\label{prop:GPcube}
Let $G$ be a group acting topically-transitively on some quasi-median graph $X$. Suppose that every vertex of $X$ belongs to finitely many cliques and that vertex-stabilisers are finite.  If clique-stabilisers act metrically properly on CAT(0) cube complexes, then so does $G$.
\end{prop}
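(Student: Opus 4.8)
The plan is to imitate the wallspace construction sketched above for graph products. I would build a $G$-invariant collection of walls $\mathcal{HW}$ on the vertex set of $X$, argue that it equips $X$ with a genuine wallspace structure, cubulate it to obtain a CAT(0) cube complex $Y$ carrying a $G$-action, and finally show that this action is metrically proper. The ingredients are: the wallspace structures on clique-stabilisers coming from their (assumed) metrically proper actions on CAT(0) cube complexes; the $G$-equivariant system of bijections $\phi_C$ and isomorphisms $\varphi_C$ recalled above, used to transfer these structures to all cliques in a coherent way; and the gate-projections $\mathrm{proj}_C \colon X \to C$, used to push the wallspace structure on a clique $C$ out to the whole graph. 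Coherence of the system $\{\mathcal{W}(C)\}$ then guarantees that the walls obtained from a clique only depend on the hyperplane $J$ dual to it, so one gets a well-defined collection $\mathcal{W}(J)$ for each hyperplane, and $\mathcal{HW} := \bigcup_J \mathcal{W}(J)$ (duplicated walls removed) is $G$-invariant by the compatibility formula $\phi_{gC}(gx) = \varphi_C(s_C(g)) \cdot \phi_C(x)$ together with equivariance of the gate-projections.

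A point I would be careful about at the outset: one may and should assume that, for each clique-stabiliser $\mathrm{stab}(Q)$ used, the orbit map onto the chosen cube complex $X(Q)$ is injective --- equivalently, that the chosen base vertex has trivial stabiliser in $\mathrm{stab}(Q)$. This is harmless: if it fails, replace $X(Q)$ by the cubulation of the wallspace on $\mathrm{stab}(Q)$ whose walls are the pullbacks of the halfspaces of $X(Q)$ together with all ``singleton walls'' $\{\{h\}, \mathrm{stab}(Q)\setminus\{h\}\}$; this collection is still $\mathrm{stab}(Q)$-invariant and finite between any two points, the new action is still metrically proper, and the base vertex now has trivial stabiliser. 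On the finitely many cliques $C$ (up to the $G$-action) with $\mathrm{stab}(C) = \mathrm{fix}(C)$ --- each of which is finite, since vertex-stabilisers are finite --- I would simply use the complete wallspace on the finite set $C$. With these choices, distinct vertices of any clique are separated by at least one of its walls.

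Next I would establish, exactly as in the graph-product argument, the distance formula: for vertices $x, y$ of $X$, the number $d_{\mathcal{HW}}(x,y)$ of walls of $\mathcal{HW}$ separating them equals $\sum_J d_{X(Q_J)}(\mathcal{O}_{Q_J}(x_J), \mathcal{O}_{Q_J}(y_J))$, where $J$ ranges over the hyperplanes separating $x$ and $y$, $Q_J$ is the reference clique labelling $J$, and $x_J, y_J \in \mathrm{stab}(Q_J)$ correspond to the gates of $x$ and $y$ on a clique dual to $J$. The observations behind this are: a wall of $\mathcal{W}(J)$ can separate $x$ and $y$ only when $J$ itself does, since otherwise $x$ and $y$ lie in the same sector of $J$ and hence have the same gate on every clique dual to $J$; the number of hyperplanes separating $x$ and $y$ equals $d_X(x,y)$, which is finite; and, for such a $J$, the walls of $\mathcal{W}(J)$ separating $x$ and $y$ biject with the walls of $\mathcal{W}(Q_J)$ separating $x_J$ and $y_J$, of which there are finitely many (they count a distance in a CAT(0) cube complex). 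In particular $(X,\mathcal{HW})$ is a wallspace, so it can be cubulated; the vertices of $X$ embed $G$-equivariantly into one connected component $Y$ of the result, and $d_Y$ restricts to $d_{\mathcal{HW}}$ on the image of $X$.

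The remaining step, and the one I expect to be the real work, is metric properness of $G \curvearrowright Y$. Fixing a base vertex $x_0 \in X$, I would show that the ball $\{y \in X : d_{\mathcal{HW}}(x_0,y) \le n\}$ is finite for every $n$, by reconstructing such a $y$ from an $X$-geodesic $x_0 = v_0, v_1, \dots, v_m = y$: the hyperplanes dual to the cliques $C_i$ containing $\{v_{i-1}, v_i\}$ are exactly those separating $x_0$ from $y$, each contributes at least $1$ to the formula (by the injectivity arranged above), so $m \le n$; and since $v_{i-1}$ is the gate of $x_0$ on $C_i$, the $i$-th summand is the distance in the cube complex attached to $C_i$ between the images of $v_{i-1}$ and $v_i$, hence $\le n$. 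So at each of the $\le n$ steps one chooses a clique through $v_{i-1}$ --- finitely many, since every vertex lies in finitely many cliques --- and then a vertex of it at cube-complex-distance $\le n$ from $v_{i-1}$ --- finitely many, by metric properness of the clique-stabiliser action. Hence finitely many $y$. Finally, since vertex-stabilisers in $G$ are finite, the orbit map $g \mapsto g\cdot x_0$ is finite-to-one, so $\{g \in G : d_Y(x_0, g\cdot x_0) \le n\}$ is finite as well, and we are done. The main obstacle is thus the distance formula, which rests on a careful analysis of how gate-projections onto cliques behave along geodesics in a quasi-median graph, together with the reconstruction argument above; the injectivity reduction is a small but genuinely necessary precaution, without which the construction can collapse (for instance when all clique-stabilisers are finite and made to act on a point).
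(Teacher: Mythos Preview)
Your proposal is correct and follows essentially the same route as the paper's own sketch: transfer the clique-stabilisers' wallspaces to the cliques via the maps $\phi_C,\varphi_C$, check that the resulting system is $G$-invariant and coherent, push each $\mathcal{W}(C)$ to a collection $\mathcal{W}(J)$ via gate-projections, set $\mathcal{HW}=\bigcup_J\mathcal{W}(J)$, cubulate, and deduce metric properness from the hypotheses. Your added precautions (forcing the orbit maps to be injective, handling the finite cliques with $\mathrm{stab}(C)=\mathrm{fix}(C)$) and your explicit reconstruction argument for properness are genuine elaborations beyond the paper's sketch, but they sit squarely inside the same strategy rather than constituting a different proof.
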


\begin{proof}[Sketch of proof.]
Fix a collection of cliques $\mathcal{C}$ intersecting each orbit of hyperplanes along exactly one clique. For every $Q \in \mathcal{C}$, let $X(Q)$ be a CAT(0) cube complex on which $\mathrm{stab}(Q)$ acts metrically properly. As above, use an orbit map to endow $\mathrm{stab}(Q)$ with a $\mathrm{stab}(Q)$-invariant structure of wallspace $\mathcal{W}(Q)$. If $C$ is an arbitrary clique of $X$, labelled by $Q \in \mathcal{C}$ (ie., $C$ can be translated by an element of $G$ in the hyperplane dual to $Q$), set
$$\mathcal{W}(C)= \phi_C^{-1} \mathcal{W}(Q).$$
Notice that, for every clique $C$ and every element $g \in G$,
$$g \mathcal{W}(C)= g \phi_{C}^{-1} \mathcal{W}(Q)= \phi_{gC}^{-1} \left( \varphi_C(g) \cdot \mathcal{W}(Q) \right) =\phi_{gC}^{-1} \mathcal{W}(Q)=  \mathcal{W}(gC).$$
Therefore, our system of wallspaces is $G$-invariant. Moreover, if $C$ and $C'$ are two cliques dual to the same hyperplane, then
$$\mathcal{W}(C')= \phi_{C'}^{-1} \mathcal{W}(Q) = t_{C \to C'} \circ \phi_{C}^{-1} \mathcal{W}(Q)= t_{C \to C'} \mathcal{W}(C).$$
So our system of wallspaces is also coherent. As a consequence, it make sense to set, for every hyperplane $J$ of $X$, 
$$\mathcal{W}(J) = \left\{ \{ \mathrm{proj}_C^{-1}(D), \mathrm{proj}_C^{-1}(D^c) \}, \ \{D,D^c \} \in \mathcal{W}(C) \right\}$$
for some clique $C$ dual to $J$. We can show that
$$\mathcal{HW}= \bigcup\limits_{\text{$J$ hyperplane}} \mathcal{W}(J)$$
endow $X$ with a $G$-invariant structure of wallspace, so that one finds an action of $G$ on the associated CAT(0) cube complex. Under the assumptions of our proposition, it can be proved that this action is metrically proper. 
\end{proof}

\noindent
Now, we focus on the maps $\phi_C$ and $\varphi_C$. Recall that the setting is the following: a group $G$ acts topically-transitively on a quasi-median graph $X$, we fix a collection of cliques $\mathcal{C}$ intersecting each orbit of hyperplanes along a single clique, and we suppose that every clique-stabiliser acts on its clique freely and transitively on the vertices. We also fix a vertex $x_0 \in X$, and, for every clique $C$, we denote by $x_0(C)$ the projection of $x_0$ onto $C$. 

\begin{definition}
Let $C$ be a clique and $g \in G$. Denote by $p_C(g)$ the unique element of $g \cdot \mathrm{stab}(C)$ sending $x_0(C)$ to $x_0(gC)$ (such an element existing since $\mathrm{stab}(C) \curvearrowright C$ is transitive on the vertices). Also, set $s_C(g)= p_C(g)^{-1}g$.
\end{definition}

\noindent
The picture to keep in mind is the following. We decompose $g$ as a prefix $p_C(g)$ and a suffix $s_C(g)$ such that $s_C(g) \in \mathrm{stab}(C)$ ``rotates'' $C$ and such that $p_C(g)$ ``translate'' $C$ to $gC$. 

\begin{definition}
Let $C$ be a clique labelled by $Q \in \mathcal{C}$. Fix an element $g \in G$ satisfying $p_C(g)=g$ and such that $gC$ is dual to the same hyperplane as $Q$. We define
$$\phi_C : \left\{ \begin{array}{ccc} C & \to & Q \\ x & \mapsto & t_{gC \to Q}(gx) \end{array} \right. \ \text{and} \ \varphi_C : \left\{ \begin{array}{ccc} \mathrm{stab}(C) & \to & \mathrm{stab}(Q) \\ h & \mapsto & \rho_Q(ghg^{-1}) \end{array} \right. .$$
\end{definition}

\noindent
Roughly speaking, $\phi_C$ translate $C$ into the hyperplane dual to $Q$ (by left-multiplicating by $g$) and next translate $gC$ along this hyperplane to $Q$ (thanks to the map $t_{gC \to Q}$); and $\varphi_C$ sends an element $h \in \mathrm{stab}(C)$ to the unique element of $\mathrm{stab}(Q)$ which induces the same permutation on the set of sectors delimited by $J$ as $ghg^{-1} \in \mathrm{stab}(gC)$. It is worth noticing that these maps does not depend on our choice of $g$ \cite[Claims 5.30 and 5.31]{Qm}. 

\paragraph{Applications.} The argument described in the proof of Proposition \ref{prop:GPcube} can be applied to many different contexts. In \cite{Qm}, we state general combinatorial theorems about:
\begin{itemize}
	\item relative hyperbolicity \cite[Theorem 5.17]{Qm};
	\item metrically proper and geometric actions on CAT(0) cube complexes \cite[Propositions 5.22 and 5.23]{Qm};
	\item a-T-menability and a-$L^p$-menability \cite[Propositions 5.25 and 5.26]{Qm};
	\item (equivariant) $\ell^p$-compressions \cite[Proposition 5.37]{Qm}.
\end{itemize}
As examples of concrete applications:
\begin{itemize}
	\item we showed that a graph product (along a finite simplicial graph) of groups acting geometrically on CAT(0) cube complexes acts geometrically on a CAT(0) cube complex \cite[Theorem 8.17]{Qm};
	\item we determined precisely when a graph product is relatively hyperbolic \cite[Theorem 8.35]{Qm};
	\item we showed that acting metrically properly on some CAT(0) cube complex is stable under wreath products \cite[Theorem 9.28]{Qm} (see also \cite{CornulierCommensurated, aTmW});
	\item we computed equivariant $\ell^p$-compressions of some wreath products \cite[Theorem 9.37]{Qm}.
\end{itemize}
 A particular case of the last point is the following statement (in which $\alpha_p^*(\cdot)$ denotes the equivariant $\ell^p$-compression):

\begin{thm}\emph{\cite[Theorem 9.54]{Qm}}
Let $H$ be a hyperbolic group acting geometrically on some CAT(0) cube complex. For every finitely generated group $G$ and every $p \geq 1$,
$$\alpha_p^*(G \wr H ) \geq \min \left( \frac{1}{p} , \alpha_p^*(G) \right),$$
with equality if $H$ is non elementary and $p \in [1,2]$. 
\end{thm}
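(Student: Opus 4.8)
The plan is to combine the general machinery of topical-transitive actions with known compression estimates for wreath products and for hyperbolic cubulated groups. First I would set up the quasi-median graph $\mathfrak{W} = X(G \wr H)$ on which $G \wr H$ acts, using the graph-of-wreaths construction from Section~\ref{section:QmEx}: since $H$ acts geometrically on a CAT(0) cube complex $X$, one may assume $X$ has a vertex $x_0$ with trivial stabiliser, form the orbit $\Omega = H \cdot x_0$, and take $\mathfrak{W}$ to be the graph of wreaths. The clique-stabilisers here are copies of $G$ (the lamp-groups) together with the ``lamplighter-position'' data, and the action of $G \wr H$ on $\mathfrak{W}$ is topical-transitive. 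Then I would invoke the general equivariant $\ell^p$-compression estimate for topical-transitive actions (\cite[Proposition 5.37]{Qm}), which expresses $\alpha_p^*(G\wr H)$ in terms of $\alpha_p^*$ of the clique-stabilisers (essentially $\alpha_p^*(G)$) and a combinatorial contribution coming from how hyperplanes in $\mathfrak{W}$ separate vertices — and this combinatorial contribution is governed by the geometry of $X$, hence of $H$.

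For the lower bound $\alpha_p^*(G \wr H) \geq \min(1/p, \alpha_p^*(G))$, the key step is to build an equivariant $\ell^p$-embedding of $G \wr H$ by assembling: (i) an equivariant embedding of each lamp-copy of $G$ with distortion controlled by $\alpha_p^*(G)$, transported coherently across cliques using the maps $\phi_C$ and $\varphi_C$ from Section~\ref{section:topicalI}; and (ii) an embedding handling the lamplighter's position and the combinatorics of which lamps are switched on, whose compression exponent is $1/p$ because it reduces to an $\ell^p$-sum indexed by hyperplanes of a CAT(0) cube complex on which $H$ acts geometrically — and hyperbolic cubulated groups have $\ell^p$-compression exactly $1$, which after the wreath-product ``spreading'' over $|H|$-many coordinates degrades to $1/p$. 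Concretely this is the same phenomenon as for classical lamplighters $F \wr \mathbb{Z}$, and the formula $d_{\mathcal{HW}}(1,g) = \sum_i d_{\mathcal{W}(G_i)}(1,g_i)$ of the graph-product sketch has an analogue here splitting the word length of an element of $G\wr H$ into a lamp-contribution and an $H$-travel contribution.

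For the matching upper bound when $H$ is non-elementary and $p \in [1,2]$, I would argue that $G\wr H$ contains, for suitable elements, a distorted copy of a lamplighter-type obstruction forcing $\alpha_p^* \le 1/p$, and separately that it contains $G$ as an undistorted subgroup so $\alpha_p^*(G\wr H)\le \alpha_p^*(G)$; together these give $\alpha_p^*(G\wr H)\le \min(1/p,\alpha_p^*(G))$. The non-elementarity of $H$ is what guarantees the presence of a ``wide enough'' combinatorial pattern of disjoint hyperplanes (e.g. via a quasi-isometrically embedded free subgroup or a bi-infinite combinatorial geodesic with infinitely many hyperplanes crossing it transversally in a spread-out way), and the restriction $p\in[1,2]$ is exactly where the sharp compression-exponent computations for wreath products (in the style of Naor–Peres) are available, using the fact that $\ell^p$ for $p\le 2$ embeds well into $\ell^2$ and the relevant Markov-type / Poincaré-inequality obstructions are tight.

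The main obstacle I expect is making the combinatorial contribution of the hyperplanes of $\mathfrak{W}$ rigorous — that is, showing the $1/p$ term really is the correct exponent both as a lower bound (the embedding works) and, in the non-elementary $p\le 2$ case, as an upper bound (no better embedding exists). The lower-bound side requires carefully checking that the coherent system of $\ell^p$-structures produced by the topical-transitive formalism assembles into a genuinely equivariant map with the claimed compression, i.e. controlling the interaction between the lamp-data and the position-data in the $\ell^p$ sum; the upper-bound side requires importing and adapting the sharp wreath-product compression obstructions, which is where the hyperbolicity and non-elementarity of $H$ and the range $p\in[1,2]$ all get used simultaneously. The parts that are essentially routine, given the earlier results, are the verification that the action on $\mathfrak{W}$ is topical-transitive and that clique-stabilisers are copies of $G$ with the expected $\ell^p$-compression.
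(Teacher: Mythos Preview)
The paper does not actually prove this theorem: it merely states it as a particular case of \cite[Theorem 9.37]{Qm}, itself obtained by applying the general equivariant $\ell^p$-compression criterion for topical-transitive actions (\cite[Proposition 5.37]{Qm}) to the graph-of-wreaths action. Your proposal follows precisely this indicated route --- construct $\mathfrak{W}$, check topical-transitivity, feed the clique-stabiliser data into the general criterion --- so at the level of strategy you are aligned with what the paper points to.

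Two places where your outline is looser than it should be. First, $\mathfrak{W}$ has two kinds of edges: the ``lamp'' edges (fixed subcomplex $C$, $\varphi$ changes at one point of $C\cap\Omega$), which sit in cliques with stabiliser a copy of $G$, and the ``move'' edges (fixed $\varphi$, $C$ changes by one hyperplane), which are genuine edges rather than large cliques. You write as if all clique-stabilisers are copies of $G$; in the actual argument both types of hyperplanes of $\mathfrak{W}$ contribute, and the $1/p$ term comes from the interaction, not just from ``spreading $G$ over $|H|$ coordinates''. Second, your upper-bound sketch is the weakest part. That $G$ is undistorted in $G\wr H$ gives $\alpha_p^*(G\wr H)\le\alpha_p^*(G)$, fine; but ``a lamplighter-type obstruction forcing $\alpha_p^*\le 1/p$'' is not an argument. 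The sharp $1/p$ upper bound uses quantitative information about $H$ (speed of random walk / Markov-type style inequalities), and this is exactly where non-elementarity of $H$ and the restriction $p\in[1,2]$ are genuinely consumed, not merely invoked. You name the right ingredients (Naor--Peres, free subgroups) but do not indicate the mechanism that turns them into the bound.
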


\section{Topical-transitive actions on quasi-median graphs II}\label{section:topicalII}

\noindent
Given a group acting topically-transitively on a quasi-median graph, the strategy described in the previous section was to transfer structures from clique-stabilisers to cliques, to extend the collection of ``local'' structures to a ``global'' structure which is invariant under the group action, and finally to exploit this action to deduce information on the group. However, many interesting group properties cannot be expressed on the group itself. For instance, being CAT(0), ie., acting geometrically on some CAT(0) space. In \cite[Sections 6 and 7]{Qm}, we showed how to modify the first step of our strategy to avoid the difficulty. For instance, if we fix a group $G$ acting topically-transitively on some quasi-median graph $X$ and if we suppose that clique-stabilisers act on CAT(0) spaces, our strategy to construct a CAT(0) space on which $G$ acts is the following (once again, we suppose for simplicity that each clique-stabiliser acts freely and transitively on the vertices of its clique):
\begin{itemize}
	\item Fix a collection of cliques $\mathcal{C}$ intersecting each orbit of hyperplanes along a single clique, and, for every $C \in \mathcal{C}$, fix a CAT(0) space $Y(C)$ on which $\mathrm{stab}(C)$ acts. Without loss of generality, we may suppose that $Y(C)$ contains a point $y_0(C)$ whose stabiliser is trivial. As above, we also fix a basepoint $x_0 \in X$, and, for every clique $C$, we denote by $x_0(C)$ the projection of $x_0$ onto $C$. Using the map $$\left\{ \begin{array}{ccc} C & \to & Y(C) \\ g \cdot x_0(C) & \mapsto & g \cdot y_0(C) \end{array} \right.,$$
we identify each clique $C \in \mathcal{C}$ with a subspace of $Y(C)$. 
	\item Next, we ``add the missing vertices'' to the clique $C$ to get a copy of $Y(C)$. This operation is called \emph{inflating the hyperplanes} of $X$. The point is that we get a new quasi-median graph $Y$, containing $X$ as an isometrically embedded subgraph, such that each clique $C_+$, which is the unique clique of $Y$ containing a given clique $C \in \mathcal{C}$, has its vertices in bijection with $Y(C)$. Moreover, if $\mathcal{C}_+$ denotes the collection of the $C_+$'s, then the action $G \curvearrowright X$ extends to a $\mathcal{C}_+$-topical action $G \curvearrowright Y$. (However, the action is no longer topical-transitive since the actions $\mathrm{stab}(C) \curvearrowright Y(C)$ are generally not transitive.)
	\item So far the situation is the following. Our group $G$ acts $\mathcal{C}_+$-topically on the quasi-median graph $Y$, and each clique $C \in \mathcal{C}_+$ can be naturally identified with the CAT(0) space $Y(C)$. Thus, now we can endow each clique $C \in \mathcal{C}_+$ with the CAT(0) metric of $Y(C)$. The next step would be to use the maps $\phi_C$ and $\varphi_C$ to construct a coherent and $G$-invariant system of CAT(0) metrics, but as the action is not topical-transitive these maps are no longer well-defined. However, it is possible to mimic the definitions of $\phi_C$ and $\varphi_C$ in more general contexts to construct maps with similar properties \cite[Sections 5.1 and 5.2]{Qm}, so that we are able to extend our collection of CAT(0) metrics to a coherent and $G$-invariant system of CAT(0) metrics $\{ (C,\delta_C) \mid \text{$C$ clique}\}$. (The point is that our new maps $\phi_C$ and $\varphi_C$ are not canonical: they depend on choices of some elements of $G$. However, according to \cite[Theorem 5.1]{Qm}, the system of metrics which is obtained does not depend on these choices.)
	\item Now, given our $G$-invariant and coherent system of metrics, we want to construct a global CAT(0) metric on $Y$ (or rather on the set of vertices of $Y$) which is invariant under the action of $G$. First, we endow each prism $P$ of $Y$, which is a product of cliques, with the $\ell^2$-product $\delta_P$ of the CAT(0) metrics defined on the corresponding cliques. Next, given two vertices $x,y \in Y$, a \emph{chain} $\Sigma$ between $x$ and $y$ is a sequence of vertices $$x_0=x, x_1, \ldots, x_{n-1}, x_n=y$$ such that, for every $0 \leq i \leq n-1$, there exists a prism $P_i$ containing both $x_i$ and $x_{i+1}$. Its \emph{length} is $$\ell(\Sigma)= \sum\limits_{i=1}^{n-1} \delta_{P_i}(x_i,x_{i+1}).$$ Finally, the global metric we define on (the vertices of) $Y$ is $$\delta^2 : (x,y) \mapsto \inf \{ \ell(\Sigma) \mid \text{$\Sigma$ chain between $x$ and $y$} \}.$$ It can be proved that the space $(Y,\delta^2)$ is indeed CAT(0) \cite[Proposition~3.11]{Qm}. 
\end{itemize}
Of course, the previous strategy can be adapted to other kinds of spaces.

\paragraph{Applications.} General criteria proved in \cite{Qm} include:
\begin{itemize}
	\item finding properly discontinuous actions on CAT(0) cube complexes \cite[Proposition 7.4]{Qm};
	\item finding virtually special and geometric actions on CAT(0) cube complexes \cite[Proposition 7.5]{Qm};
	\item finding geometric actions on CAT(0) spaces \cite[Theorem 7.7]{Qm}.
\end{itemize}
As corollaries, one can prove that:
\begin{itemize}
	\item graph products (along finite graphs) of CAT(0) groups are CAT(0) \cite[Theorem 8.20]{Qm};
	\item graph products (along finite graphs) of groups acting geometrically and virtually specially on CAT(0) cube complexes are virtually special \cite[Theorem 8.17]{Qm};
	\item acting properly on some CAT(0) cube complex is stable under wreath products \cite[Corollary 9.29]{Qm} (see also \cite{CornulierCommensurated, aTmW}).
\end{itemize}

\addcontentsline{toc}{section}{References}

\bibliographystyle{alpha}
\bibliography{QMintro}

\end{document}